\documentclass[a4paper,11pt]{article}
\usepackage[latin1]{inputenc}
\usepackage{amsfonts}
\usepackage{amsthm}
\usepackage{amsmath}
\usepackage{amsfonts}
\usepackage{latexsym}
\usepackage{amssymb}
\usepackage{dsfont}

\newtheorem{fed}{Definition}[section]
\newtheorem{teo}[fed]{Theorem}
\newtheorem*{teo*}{Theorem}
\newtheorem{lem}[fed]{Lemma}
\newtheorem{cor}[fed]{Corollary}
\newtheorem{pro}[fed]{Proposition}
\theoremstyle{definition}
\newtheorem{rem}[fed]{Remark}

\newtheorem{nota}[fed]{Notations}

\def\ga{\gamma}

\oddsidemargin -0in \topmargin -0.5in \textwidth 16.54truecm
\textheight 24.3truecm
\def\n0{n_{ \text{\rm \tiny o}}}

\newcommand{\IN}[1]{\mathbb {I} _{#1}}
\def\In{\mathbb {I} _n}
\def\IM{\mathbb {I} _m}
\def\suml{\sum\limits}
\def\prodl{\prod\limits}
\def\QEDP{\tag*{\QED}}

\def\bce{\begin{center}}
\def\ece{\end{center}}

\DeclareMathOperator{\FP}{FP\,}

\def\cO{{\mathcal O}} 
\def\cD{\mathcal D}

\def\subim{_{i\in \IN{n}}\,}
\def\py{\peso{and}}
\def\rk{\text{\rm rk}}
\def\noi{\noindent}
\def\cF{\mathcal F}
\def\cG{\mathcal G}
\def\QED{\hfill $\square$}
\def\EOE{\hfill $\triangle$}

\newcommand{\peso}[1]{ \quad \text{ #1 } \quad }
\def\uno{\mathds{1}}
\def\bm{\left[\begin{array}}
\def\em{\end{array}\right]}
\def\ben{\begin{enumerate}}
\def\een{\end{enumerate}}
\def\bit{\begin{itemize}}
\def\eit{\end{itemize}}
\def\barr{\begin{array}}
\def\earr{\end{array}}
\def\igdef{\ \stackrel{\mbox{\tiny{def}}}{=}\ }

\def\eps{\varepsilon}
\def\la{\lambda}

\def\N{\mathbb{N}}
\def\R{\mathbb{R}}
\def\C{\mathbb{C}}


\def\cE{\mathcal{E}}
\def\F{\mathcal{F}}
\def\G{\mathcal{G}}
\def\cH{\mathcal{H}}
\def\cK{\mathcal{K}}

\def\cP{\mathcal{P}}

\def\cS{{\cal S}}

\def\cB{{\cal B}}

\def\cV{{\cal F}}

\def\ca{\mathbf{a}}

\def\inc{\subseteq}

\def\inv{^{-1}}
\def\rai{^{1/2}}

\def\api{\langle}
\def\cpi{\rangle}

\def\ua{^\uparrow}
\def\da{^\downarrow}

\def\nuel{\nu(\la\coma t)}

 \DeclareMathOperator{\tr}{tr}
\DeclareMathOperator{\gen}{span}

\DeclareMathOperator{\leqp}{\leqslant}
\DeclareMathOperator{\geqp}{\geqslant}
\DeclareMathOperator{\convf}{Conv (\R_{\ge0})}
\DeclareMathOperator{\convfs}{Conv_s (\R_{\ge0})}

\def\RSV{\cF= \{f_i\}_{i\in \, \IN{n}}}

\newcommand{\hil}{\mathcal{H}}
\newcommand{\op}{L(\mathcal{H})}
\newcommand{\lhk}{L(\mathcal{H} \coma \mathcal{K})}

\newcommand{\posop}{L(\mathcal{H})^+}

\def\H{{\cal H}}
\def\glh{\mathcal{G}\textit{l}\,(\cH)}

\newcommand{\mat}{\mathcal{M}_d(\mathbb{C})}

\newcommand{\matsad}{\mathcal{H}(d)}

\newcommand{\matud}{\mathcal{U}(d)}

\newcommand{\matpos}{\mat^+}

\newcommand{\matinvd}{\mathcal{G}\textit{l}\,(d)}

\newcommand{\matrec}[1]{\mathcal{M}_{#1} (\mathbb{C})}

\def\beq{\begin{equation}}
\def\eeq{\end{equation}}

\def\pausa{\medskip\noi}

\begin{document}

\title{ {\bf Multiplicative Lidskii's inequalities and \\ optimal perturbations of frames}}
\author{Pedro G. Massey, Mariano A. Ruiz  and Demetrio Stojanoff\thanks{Partially supported by CONICET 
(PIP 5272/05) and  Universidad Nacional de La PLata (UNLP 11 X472).} }
\author{P. G. Massey, M. A. Ruiz and D. Stojanoff \\ {\small Depto. de Matem\'atica, FCE-UNLP,  La Plata, Argentina
and IAM-CONICET  \footnote{e-mail addresses: massey@mate.unlp.edu.ar , mruiz@mate.unlp.edu.ar , demetrio@mate.unlp.edu.ar}
}}
\date{}
\maketitle

\begin{abstract} 
In this paper we study two design problems in frame theory:
on the one hand, given a fixed finite frame $\cF$ for $\hil\cong\C^d$ we compute those dual frames $\cG$ of $\cF$ that 
are optimal perturbations of the canonical dual frame for $\cF$ under certain restrictions on the norms of the elements of $\cG$. On the other hand, for a fixed finite frame $\cF=\{f_j\}_{j\in\In}$ for $\hil$ 
we compute those invertible operators $V$ such that $V^*V$ is a perturbation of the identity and such that 
the frame $V\cdot \cF=\{V\,f_j\}_{j\in\In}$ - which is equivalent to $\cF$ - is optimal among such perturbations of $\cF$. In both cases, optimality is measured with respect to submajorization of the eigenvalues of the frame operators. Hence,  our optimal designs are minimizers of a family of convex potentials that include the frame potential and the mean squared error. 
The key tool for these results is a multiplicative analogue of Lidskii's inequality in terms of log-majorization and a characterization of the case of equality.
 \end{abstract}

\def\coma{\, , \, }

\noindent  AMS subject classification: 42C15, 15A60.

\noindent Keywords: frames, perturbation of frames, majorization, Lidskii's inequality, convex potentials.

\date{}

\def\coma{\, , \, }

\tableofcontents

\section{Introduction} 

A finite frame for $\hil\cong\C^d$ is a sequence $\F=\{f_j\}_{j\in\In}$ that spans $\hil$, where $\In=\{1,\ldots,n\}$ (for a detailed exposition on frames and several recent research topics within this theory see \cite{FF,Chrisbook} and the references therein). Given a frame $\F=\{f_j\}_{j\in\In}$, a sequence $\cG=\{g_j\}_{j\in \In}$ is called a dual frame for $\cF$ if for every $f\in\hil$ the following reconstruction formulas hold: $$f=\sum_{j\in\In}\langle f,\,g_j\rangle\ f_j \ \text{ and  } \ f=\sum_{j\in\In}\langle f,\,f_j\rangle\ g_j \ .$$
Hence, frames provide a (possibly redundant) linear-encoding scheme for vectors in $\hil$.

Let $\F=\{f_j\}_{j\in\In}$ be a frame for $\hil$ and let $\cD(\cF)$ denote the set of dual frames for $\cF$.
There is a distinguished dual called the canonical dual of $\cF$, denoted $\cF^\#\in\cD(\cF)$, which is a natural choice in several ways. But in case $n>d$ it is well known that $\cD(\cF)$ has a rich structure (this last fact is one of the main advantage of frames over bases $\cB=\{v_j\}_{i\in\IN{d}}$ for which $\cD(\cB)$ becomes a singleton).
Thus, in applied situations, the structure of $\cD(\cF)$ can be exploited to obtain numerically stable encoding-decoding schemes derived from the dual pair $(\cF,\cG)$, for some choice of dual frame $\cG\in\cD(\cF)$ beyond $\cF^\#$; this 
is the starting point of the so-called (optimal) design problems for dual frames (see \cite{LeHanagre,LoHanagre,MRS2,MRS3,MRS4}). 

Another research topic in frame theory is the design of (optimal) stable configurations of vectors (frames) under certain restrictions. Typically, the stability of a frame $\cF$ is measured in terms of the spread the eigenvalues of 
the positive semidefinite operator $S_\cF=\sum_{j\in\In} f_j\otimes f_j$. One of the most important examples of such a measure is the frame potential of $\cF$, denoted by $\FP(\cF)$, introduced in \cite{BF}; explicitly, for a sequence $\cF=\{f_j\}_{j\in\In}$ then $$\FP(\cF)=\sum_{j,\,k\in\In}|\langle f_j,\,f_k\rangle |^2=\tr(S_\cF^2)\ .$$
In \cite{BF,CKFT} it is shown that minimizers of the frame potential, within convenient sets of frames, have many nice structural features and are optimal in several ways. 
Recently, there has also been interest in the so-called mean squared error of $\cF$, denoted MSE$(\cF)$, given by MSE$(\cF)=\tr(S_\cF^{-1})$ (see \cite{FMP,MR,MRS12}). 

It turns out that there is an structural measure of optimality, called sub-majorization, that allows to deal with both the frame potential and the mean squared error. This pre-order relation, defined between eigenvalues of frame operators, has proved useful in explaining the structure of minimizers of convex potentials (see \cite{MR}). Sub-majorization has also been useful in obtaining the structure of optimal vector configurations as well (see \cite{MRS4,MRS pre}). In turn, sub-majorization relations imply a family of tracial inequalities in terms of convex functions, that contain the frame potential and mean squared errors. We point out that these tracial inequalities have interest in their own right and collectively characterize sub-majorization.

In this paper we consider two optimal design problems, where optimality is measured in terms of sub-majorization.
On the one hand, given a fixed frame $\cF$, we consider the problem of designing a dual frame $\cG=\{g_j\}_{j\in\In}\in \cD(\cF)$ such that $\sum_{j\in\In}\|g_j\|^2\geq t$ for some appropriate $t>0$, the distance between $\cG$ and $\cF^\#$ is controlled by some $\epsilon>0$ and such that the spread of the eigenvalues of $S_\cG$ is minimal (with respect to sub-majorization) among the eigenvalues of $S_{\cG'}$ for all such $\cG'$. Thus, we refine the analysis of the optimal design problem 
for dual frames obtained in \cite{MRS4} 
(see Section \ref{el prob1} for a detailed description of this problem and some further motivations). Since we keep control of the distance of these frames to the canonical dual, we consider this optimal solution as a perturbation of the canonical dual (that improves some of its numerical features). On the other hand, given a fixed frame $\cF=\{f_j\}_{j\in\In}$ for $\hil\cong\C^d$ with $n>d$ 
we consider the problem of designing an invertible operator $V$ acting on $\hil$ such that $V^*V$ is a perturbation of the identity operator i.e. such that $V$ is nearly a unitary operator (see Section \ref{sec opt cohe pert} for a detailed description of this problem and further motivations) and such that, if we denote by $V\cdot \cF=\{V\,f_j\}_{j\in \In}$ then, 
the spread of the eigenvalues of $S_{V\cdot \cF}$ is minimal (with respect to sub-majorization) among the eigenvalues of $S_{V'\cdot \cF}$ for all such $V'$. Notice that a frame $\cG$ for $\hil$ can be written as $V\cdot \cF$ for some invertible operator $V$ acting on $\hil$ if and only if the frames $\cF$ and $\cG$ have the same linear relations. Two such frames are called equivalent (see \cite{Bal}): hence, we search for frames $V\cdot \cF$ that are equivalent to $\cF$ (in the previous sense) and such that they improve some numerical features of $\cF$. Moreover, since $V^*V$ is a perturbation of the identity operator then $V\cdot \cF$ is a perturbation of $\cF$ - up to unitary equivalence.

In order to tackle both problems above, we introduce abstract models for them within the framework of matrix analysis. Although the frame problems seem unrelated, it turns out that the abstract model for the design of optimal duals plays a crucial role in the analysis of the abstract model for the perturbations by equivalent frames.

The key tools for these results are the multiplicative analogue of Lidskii's inequality in terms of log-majorization 
obtained by Li and Mathias in \cite{LiMat}, and a characterization of the case of equality (see Section \ref{sec append}). 
We also use the 
optimality results 
proved in  \cite{MRS12}, which are based in 
the additive case for Lidskii's inequality and the case of equality, studied in the appendix of that paper.

The paper is organized as follows. In Section \ref{sec 2}, after setting the general notations used throughout the paper, we describe the basic framework of finite frames that we shall need, together with a brief description of general convex potentials. We also include a description of sub-majorization and log-majorization which are two notions from matrix theory.
In Section \ref{sec the problems} we give a detailed description of the two problems in frame theory that 
we consider in this note, using the notations and terminology from Section \ref{sec 2}. In Section \ref{sec 4} we first introduce an abstract model for the design of optimal dual frames with restrictions, and apply tools from matrix analysis to obtain optimality results; we then apply these results to the original frame problem. Similarly, in Section \ref{sec 5 opt cohe pert} we first analyze an abstract model for the design of optimal perturbations of a frame by equivalent frames and then apply the results of the abstract model to the original frame problem. Finally, in Section \ref{sec append} we develop some aspects of the multiplicative Lidskii's inequality with respect to log-majorization which are needed for the analysis of the abstract model in Section \ref{sec 5 opt cohe pert}.

\section{Preliminaries}\label{sec 2}
In this section we describe the basic notions that we shall consider throughout the paper. 

\subsection{General notations.}

Given $\cH \cong \C^d$  and $\cK \cong \C^n$, we denote by $\lhk $ 
the space of linear operators $T : \cH \to \cK$. 
If $\cK = \cH$ we denote by $\op = L(\cH \coma \cH)$, 
by $\glh$ the group of all invertible operators in $\op$, 
 by $\posop $ the cone of positive operators and by
$\glh^+ = \glh \cap \posop$. 
If $T\in \op$, we  denote by   
$\|T\|$ its operator (spectral) norm,
by $\rk\, T= \dim R(T) $  the rank of $T$,
and by $\tr T$ the trace of $T$.

\pausa
If $W\inc \cH$ is a subspace we denote by $P_W \in \posop$ the orthogonal 
projection onto $W$. 
Given $x\coma y \in \cH$ we denote by $x\otimes y \in \op$ the rank one 
operator given by 
\beq \label{tensores}
x\otimes y \, (z) = \api z\coma y\cpi \, x \peso{for every} z\in \cH \ .
\eeq
Note that if $\|x\|=1$ then $x\otimes x = P_{\gen\{x\}}\,$.

\pausa 
By fixing orthonormal bases (ONB) 
of the Hilbert spaces involved, we shall identify operators with 
matrices, using the following notations: 
by $\matrec{n,d} \cong L(\C^d \coma \C^n)$ we denote the space of complex $n\times d$ matrices. 
If $n=d$ we write $\mat = \matrec{d,d}$ ;   
$\matsad$ is the $\R$-subspace of selfadjoint matrices,  
$\matinvd$ the group of all invertible elements of $\mat$, $\matud$ the group 
of unitary matrices, 
$\matpos$ the set of positive semidefinite
matrices, and $\matinvd^+ = \matpos \cap \matinvd$. 

\pausa
Given $m \in \N$ we denote by $\IM = \{1, \dots , m\} \inc \N$ and 
$\uno = \uno_m  \in \R^m$ denotes the vector with all its entries equal to $1$. 
For a vector $x\in \R^m$ we denote by 
$\tr \, x = \sum_{i\in \IN{d}} \, x_i$ 
and by $x^\downarrow$ (resp. $x^\uparrow$) the rearrangement
of $x$ in  decreasing (resp. increasing) order. We denote by
$(\R^m)^\downarrow = \{ x\in \R^m : x = x^\downarrow\}$ the set of downwards ordered vectors, and similarly $(\R^m)\ua =\{x\in \R^m:\ x=x\ua\}$.

\pausa
Given $S\in \matpos$, we write $\la(S) = \la\da(S)\in 
(\R_{\geq 0}^d)^\downarrow$ the 
vector of eigenvalues of $S$ - counting multiplicities - arranged in decreasing order. 
Similarly we denote by $\la\ua(S) \in (\R_{\geq 0}^d)\ua$ the reverse ordered 
vector of eigenvalues of $S$. 

\subsection{Basic framework of finite frames }\label{basic}

Let $d, n \in \N$, with $d\le n$. Fix a Hilbert space $\hil\cong \C^d$. 
A family $ \RSV \in  \cH^n $  is an 
 frame for $\cH$  if there exist constants $A,B>0$ such that
\beq\label{frame defi} A\ \|x\|^2\leq \sum_{i\in\In} |\left \langle x \, , f_i\right \rangle|^2\leq B \ \|x\|^2 \peso{for every} x\in \hil \ .
\eeq
The { optimal frame bounds}, denoted by  $A_\cF, B_\cF$ are the optimal constants in Eq. \eqref{frame defi}. If $A_\cF=B_\cF$ we call $\cF$ a tight frame.
Since $\dim \hil<\infty$, a family  $\RSV$  is an 
frame if and only if $\gen\{f_i: i \in \In \} = \cH$.  

\pausa
Given $\RSV \in  \cH^n $, we consider its 
{ analysis} operator $T_\cV \in L(\hil\coma\C^n)$, given 
by 
 \beq \ T_\cV\, x= \big( \,\api x \coma f_i\cpi\,\big)  \subim 
\ , \peso{for every} x\in \cH \ .
\eeq
  Its adjoint $T_\cV^* \in L(\C^n\coma \cH)$ is called the { synthesis} operator and it is given by 
 $T_\cV ^* \, \ca =\sum_{i\in \, \IN{n}} a_i\, f_i$ 
for every $\ca = (a_i)_{i\in\In}\in \C^n$. 
With the notations of \eqref{tensores}, the frame operator of $\cV$ is 
$$
\barr{rl}
S_\cV &= T_\cV^*\  T_\cV = \sum_{i \in \In} f_i \otimes f_i 
\in \posop\ . \earr
$$
Notice that, if 
$\RSV \in  \cH^n $ then  
$\api S_\cV \, x\coma x\cpi \, = \sum\subim \, 
 \big|\, \api x \coma f_i\cpi \, \big|^2$ for every 
 $x\in \cH$. Hence, $\cF$ is a frame if and only if 
$S_\cF\in \glh^+$ and in this case
$A_\cV \, \|x\|^2 \, \le \, \api S_\cV \, x\coma x\cpi 
 \,  \le \, B_\cV \, \|x\|^2$ for every $ x\in \cH $.   
Therefore, $A_\cV   =\la_{\min} (S_\cV) = \|S_\cV\inv \| \inv$ and $ 
\la_{\max} (S_\cV) = \|S_\cV \| = B_\cV \,$.
Moreover, $\cV $ is tight if and only if $S_\cV = 
\frac{\tau}{d}  \, I_\H\,$, where $\tau = 
\tr S_\cV = \sum\subim \|f_i\|^2 \,$.

\pausa Let $\cF=\{f_i\}_{i\in\In}\in\hil^n$ be a frame for $\hil$. A family $\cG=\{g_i\}_{i\in\In}$ is said to be a {  (alternate) dual} of $\cF$ if $$ f=\sum_{i\in\In} \langle f,g_i\rangle \ f_i \ , \ \ \text{ for every } f\in\hil\ .
$$ It is easy to see that $\cG$ is a dual of $\cF$ iff $T_\cF^*\, T_\cG=I_\hil$. Hence, in that case $\cG$ is a frame and $\cF$ is a dual of $\cG$ and we say that $(\cF,\cG)$ is a dual pair of frames for $\hil$. We shall consider $$\cD(\cF)=\{ \cG: \cG \text{ is a dual frame of } \cF\ \}\subset \hil ^n\ . $$
The so-called canonical dual of $\cF$, denoted $\cF^\#$, is given by $\cF^\#=\{S_\cF^{-1}f_i\}_{i\in\In}$. It is straightforward to check that $T_{\cF^\#}=T_\cF S_\cF^{-1}$ and then $T_{\cF}^* T_{\cF^\#}=S_{\cF}\,S_{\cF}^{-1}=I_\hil$ so $\cF^\#\in\cD(\cF)$. The canonical dual is a distinguished dual since it possesses several minimality properties. Nevertheless, notice that whenever $\dim\hil=d<n$ (i.e. whenever the frame $\cF=\{f_i\}_{i\in\In}$ is a redundant set of linear generators) then $\cD(\cF)$ has infinitely many elements and it turns out to have a very rich  structure. This is one of the main advantages of the redundant frame $\cF$ over a (not necessarily orthonormal) basis $\cB=\{v_i\}_{i\in\IN{d}}$ in $\hil$, since the set of duals of the latter has only one element, namely $\cD(\cB)=\{\cB^\#\}$.

\pausa 
In their seminal work \cite{BF}, Benedetto and Fickus introduced a functional defined (on unit norm frames), the so-called frame potential, given by 
$$ 
\barr{rl}
\FP(\{f_i\}_{i\in \In} )& 
=\sum_{i,\,j\,\in \In}|\api f_i\coma f_j \cpi |\,^2\ .
\earr
$$ 
One of their major results shows that tight unit norm frames - which form an important class of frames because of their simple reconstruction formulas - can be characterized as (local) minimizers of this functional among unit norm frames. Since then, there has been interest in (local) minimizers of the frame potential within certain classes of frames, since such minimizers can be considered as natural substitutes of tight frames. 
Notice that, given $\cF=\{f_i\}_{i\in \IN{n}}\in  \cH^n$ then $\FP(\cF)=\tr \, S_\cF^2 
=\sum_{i\in \IN{d}}\lambda_i(S_\cF)^2$. Recently, there has been interest in the structure of minimizers of other potentials such as the so-called mean squared error (MSE) given by MSE$(\cF)=\tr(S_\cF^{-1})$.  

\pausa
These remarks have motivated 
the analysis of the structure of minimizers of general convex potentials:

\begin{fed}\label{pot generales}\rm
Let us denote by 
$$
\convf = \{ 
h:[0 \coma \infty)\rightarrow [0 \coma \infty): h   \ \mbox{ is a convex function } \ \} 
$$  
and $\convfs = \{h\in \convf : h$ is strictly convex $\}$. 
Following \cite{MR} we consider the 
(generalized) {convex potential} $P_h$ associated to $h\in \convf$, given by
$$
\barr{rl}
P_h(\cF)&=\tr \, h(S_\cF) = \sum_{i\in \IN{d}}h(\lambda_i(S_\cF)\,) \peso {for} 
\cF=\{f_i\}_{i\in \IN{n}}\in  \cH^n \ , \earr
$$
where the matrix $h(S_\cF)$ is defined by means of the usual functional calculus in $\posop$. \EOE
\end{fed}

\pausa
In order to deal with these general convex potential we consider the notions of submajorization and log-majorization in the next section.

\subsection{Submajorization and log-majorization} \label{subsec prelims mayo}

Next we briefly describe majorization and log-majorization, two notions from matrix analysis theory that will be used throughout the paper. For a detailed exposition on these relations see \cite{Bat}.

\pausa 
 Given $x,\,y\in \R^d$ we say that $x$ is
{ submajorized} by $y$, and write $x\prec_w y$,  if
$$
\suml_{i=1}^k x^\downarrow _i\leq \suml_{i=1}^k y^\downarrow _i \peso{for every} k\in \mathbb I_d \ .
$$  
If $x\prec_w y$ and $\tr x = \sum_{i=1}^dx_i=\sum_{i=1}^d y_i = \tr y$,  we say that $x$ is
majorized by $y$, and write $x\prec y$. 

\pausa
On the other hand we write 
$x \leqp y$ if $x_i \le y_i$ for every $i\in \mathbb I_d \,$.  It is a standard  exercise 
to show that $x\leqp y \implies x^\downarrow\leqp y^\downarrow  \implies x\prec_w y $. 
Our interest in majorization is motivated by the
relation of this notion with tracial inequalities 
for convex functions. 
Indeed, given $x,\,y\in \R^d$ and  $f:I\rightarrow \R$ a 
convex function defined on an interval $I\inc \R$ such that 
$x,\,y\in I^d$,  then (see for example \cite{Bat}): 
\ben 
\item If one assumes that $x\prec y$, then 
$ 
\tr f(x) \igdef\suml_{i\in\IN{d}}f(x_i)\leq \suml_{i\in\IN{d}}f(y_i)=\tr f(y)\ .
$
\item If only $x\prec_w y$,  but the map $f$ is also increasing, then  still 
$\tr f(x) \le \tr f(y)$. 
\item If $x\prec_w y$ and $f$ is a strictly convex function such 
that $\tr \,f(x) =\tr \, f(y)$ then there exists a permutation $\sigma$ 
of $\IN{d}$ such that $y_i=x_{\sigma(i)}$ for $i\in \IN{d}\,$. 
\een

\begin{rem}\label{rem doublystohasctic}
Majorization between vectors in $\R^d$ is intimately related with the 
class of doubly stochastic $d\times d$ matrices, denoted by DS$(d)$. Recall that a 
$d\times d$ matrix $D \in $ DS$(d)$   if it has non-negative entries and each row sum and column sum equals 1. 

\pausa
It is well known (see \cite{Bat}) that given $x\coma y\in \R^d$ then $x\prec y$ 
if and only if there exists $D\in$ DS$(d)$ such that $D\, y=x$. As a consequence 
of this fact we see that if $x_1\coma y_1\in \R^r$ and $x_2 \coma y_2\in \R^s$ 
are such that $x_i\prec y_i\, $, $i=1\coma 2$, 
then $x=(x_1\coma x_2)\prec y=(y_1\coma y_2)$ in $\R^{r+s}$.
Indeed, if $D_1$ and $D_2$ are the doubly stochastic matrices corresponding the previous majorization relations then $D=D_1\oplus D_2\in$ DS$(r+s)$ is such that $D\, y=x$. 
\EOE
\end{rem}

\pausa
Log-majorization between vectors in $\R_{\geq 0}^d$ is a multiplicative analogue of majorization in $\R^d$. Indeed, given $x,\, y\in \R_{\geq 0}^d$ we say that $x$ is { log-majorized} by $y$, denoted $x\prec_{\log} y$, if 
\beq\label{mayolog}
\prodl_{i=1}^k x^\downarrow _i\leq \prodl_{i=1}^k y^\downarrow _i \peso{for every} k\in \mathbb I_{d-1} \peso{and}  
\prodl_{i=1}^d x^\downarrow _i= \prodl_{i=1}^d y^\downarrow _i \ .
\eeq  
Our interest in log-majorization is also motivated by the relation of this notion with tracial inequalities for convex functions. It is known (see \cite{Bat}) that $$ \peso{if} x,\, y\in \R_{\geq  0}^d \ , \ \  x\prec_{\log} y  \ \ \Rightarrow \ \ x\prec_w y\ .$$ Hence, if $x,\, y\in \R_{\geq 0}^d$ are such that $x\prec_{\log} y$ then for every convex and increasing function $f:(0,\infty)\rightarrow R$ we get that $\tr(f(x))\leq \tr(f(y))$.

\section{Two problems in frame theory}\label{sec the problems}

In this section we present a detailed description of the two frame problems together with some
further motivations, using the notations and terminology from Section \ref{sec 2}.

\subsection{Optimal perturbation of the canonical dual frame with restrictions}\label{el prob1}

Consider a fixed frame $\F=\{f_j\}_{j\in\In}$ for $\hil\cong\C^d$, with $n>d$. Then, the set of dual frames $\cD(\cF)$ has a rich structure. It is known that if $\cG\in \cD(\cF)$ then $ S_\cG\geq S_{\cF^\#}=S_\cF^{-1}$, with respect to the operator order. This strong inequality explains several optimality properties of the canonical dual frame $\cF$. For example, if $\cG=\{g_i\}_{i\in\In}$ and $\cF^\#=\{f^\#_i\}_{i\in\In}$ we have that 
$$\sum_{i\in\In}\|f^\#_i\|^2=\tr(S_{\cF^\#})\leq \tr(S_{\cG})=\sum_{i\in\In}\|g_i\|^2 \, , $$ with equality if and only if $\cG=\cF^\#$.

\pausa
Nevertheless, in applied situations, it is desired to consider numerically stable encoding-decoding schemes derived from the dual pair $(\cF,\cG)$, for some choice of dual frame $\cG\in\cD(\cF)$. 
A possible way out of this situation is as follows: for $t>\tr(S_{\cF^\#})$ consider 
$$\cD_t(\cF)=\{\cG=\{g_i\}_{i\in\In}\in \cD(\cF):\ \sum_{i\in\In}\|g_i\|^2\geq t\}$$ 
and search for optimal duals within $\cD_t(\cF)$ with respect to some measure of optimality (e.g. minimizers of the condition number). It turns out (see \cite{MRS4}) that there exists a distinguished class $\cO\cD_t(\cF)\subset\cD_t(\cF) $
such that for every $\cG^o\in \cO\cD_t(\cF)$ and every $\cG\in \cD_t(\cF)$ we have the majorization relation $\lambda(S_{\cG^o})\prec_w \lambda(S_\cG)$. This last fact implies several optimality properties of the class $\cO\cD_t(\cF)$.

\pausa
Still, in the search for optimal alternative duals for $\cF$, there are some properties of the canonical dual frame $\cF^\#$ that we may want to retain. In order to preserve some of the minimal features of the canonical dual frame and yet search for numerically stable alternative duals (which are possibly best suited for practical purposes) we introduce the following class of dual frames: set $m=2d-n$, let $t> \tr(S_{\F^\#})$ and $\eps>0$ be such that $t-\tr(S_{\F^\#})\leq \min\{(d-m),\,d\}\cdot \eps^2$ and define
$$\cD_{(t \coma\eps)}(\cF)=\{\G=\{g_j\}_{j\in\In}\in \cD(\F): \ \sum_{j\in\In}\|g_j\|^2\geq t \ , \ \|T_\G-T_{\F^\#}\|\leq \eps \}\ . $$ 
Hence, we search for optimal duals for $\cF$ within $\cD_{(t \coma\eps)}(\cF)$. As a criteria for optimality,
 following \cite{MRS4} we search for $\prec_w$-minimizers of the eigenvalues of the frame operators $S_\cG$ for $\cG\in \cD_{(t \coma\eps)}(\cF)$. Hence, we consider the associated set 
$$
\cS(\cD_{(t \coma \eps)}(\cF))\igdef\{S_\G: \ \G\in \cD_{(t \coma \eps)}(\cF)\} \inc\matpos\ .
 $$
As we shall see, there exist $\prec_w$-minimizers within $\cS(\cD_{(t \coma \eps)}(\cF))$; moreover, their spectral and geometrical features can be explicitly computed. We point out that the structure of optimal duals 
depends both on the norm restriction (of the frame elements of $\cG$) and on the operator norm distance restriction. As a first step in our analysis, we obtain an explicit representation of the frame operators of the elements of $\cD_{(t \coma \eps)}(\cF)$.

\begin{pro}\label{carac de ops frame duals}
Let $\F=\{f_j\}_{j\in\In}$ be a frame for $\C^d$ and set $m=2d-n$. Let $t> t_0= \tr(S_{\F^\#})$ and $\eps>0$ 
be such that $ t-t_0 
\leq \min\{(d-m),\, d\}\cdot \eps^2$. Then
$$ \cS(\cD_{(t \coma \eps)}(\cF))= \{S_{\cF^\#}+B :\ B\in\matpos\ , \ \tr(B)\geq t-t_ 0 
\ , \ \|B\|\leq \eps^2\ , \ \rk(B)\leq d-m \ \} \ .$$
\end{pro}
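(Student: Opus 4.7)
The plan is to use the standard parametrization of $\cD(\cF)$: a sequence $\cG\in \hil^n$ is a dual of $\cF$ if and only if its analysis operator can be written as
$$T_\cG = T_{\cF^\#} + Z \ , \peso{with} Z\in L(\hil,\C^n) \peso{such that} T_\cF^*\,Z = 0\ ,$$
i.e.\ $R(Z)\inc \ker T_\cF^* = R(T_\cF)\orto$, a subspace of dimension $n-d = d-m$. Indeed, $T_\cF^* T_\cG = I_\hil$ forces $T_\cF^*(T_\cG - T_{\cF^\#}) = 0$, and conversely any such $Z$ produces a dual. From this representation the rank restriction $\rk(Z)\leq d-m$ is automatic.

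Next I would compute $S_\cG$. Since $T_{\cF^\#}=T_\cF S_\cF\inv$, we have $T_{\cF^\#}^*\,Z = S_\cF\inv T_\cF^*\,Z = 0$, and hence
$$S_\cG = (T_{\cF^\#}^* + Z^*)(T_{\cF^\#}+Z)=S_{\cF^\#} + Z^*Z\ .$$
Set $B:=Z^*Z\in \matpos$. Then each of the three defining conditions of $\cD_{(t\coma\eps)}(\cF)$ translates cleanly: the norm sum $\sum_j \|g_j\|^2 = \tr(S_\cG)= t_0 + \tr(B)\geq t$ becomes $\tr(B)\geq t-t_0$; the distance condition $\|T_\cG-T_{\cF^\#}\| = \|Z\|\leq \eps$ becomes $\|B\|=\|Z\|^2\leq \eps^2$; and the range restriction on $Z$ gives $\rk(B)\leq \rk(Z)\leq d-m$. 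This yields the inclusion $\cS(\cD_{(t\coma\eps)}(\cF))\subseteq \{S_{\cF^\#}+B:\ \ldots\}$.

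For the reverse inclusion, given $B\in \matpos$ satisfying the three conditions with $r:=\rk(B)\leq d-m$, I would construct $Z\in L(\hil,\C^n)$ with $Z^*Z=B$ and $R(Z)\inc R(T_\cF)\orto$, and then read off $\cG$ from $T_\cG:=T_{\cF^\#}+Z$. Concretely, factor $B = C^*C$ for some $C\in L(\hil,\C^n)$ whose range is an $r$-dimensional subspace of $\C^n$ (e.g.\ take a spectral factorization of $B$ and pad with zeros). Since $r\leq n-d = \dim R(T_\cF)\orto$, there exists a unitary $W\in \matu(n)$ sending $R(C)$ into $R(T_\cF)\orto$; set $Z:=WC$. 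Then $Z^*Z=B$, $R(Z)\inc R(T_\cF)\orto$, and the sequence $\cG\in \hil^n$ whose analysis operator equals $T_{\cF^\#}+Z$ lies in $\cD_{(t\coma\eps)}(\cF)$ with $S_\cG = S_{\cF^\#}+B$.

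The main obstacle, which is not really an obstacle, is the careful bookkeeping that ties together three different restrictions (trace, operator norm, rank) with the geometric ambient constraint $R(Z)\inc R(T_\cF)\orto$; the key identity $d-m=n-d$ is what makes the rank restriction match the codimension of $R(T_\cF)$, so that the unitary-transport step in the reverse direction is always feasible.
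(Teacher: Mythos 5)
Your proof is correct and follows essentially the same route as the paper: forward direction via $T_\cG = T_{\F^\#} + Z$ with $T_\F^*Z=0$, compute $S_\cG = S_{\F^\#}+Z^*Z$, and reverse direction by factoring $B$ and transporting the range into $\ker T_\F^* = R(T_\F)\orto$. The only cosmetic difference is that you make the unitary-transport step in the reverse inclusion explicit, where the paper simply asserts the existence of a suitable factorization $B=A^*A$ with $R(A)\inc \ker T_\F^*$.
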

\begin{proof}
Let $\G\in\cD_{(t \coma \eps)}(\cF)$ and notice that then 
$$
T_\G^*T_\F= T_{\F^\#}^*T_\F=I_\hil
\implies (T_\G^* - T_{\F^\#}^*)T_\F=0\ .
$$ 
Hence, if we let $A=T_\G - T_{\F^\#}:\C^d\rightarrow \C^n$ then $A^*\,T_\F=0$ which implies that $T_\F^*A=0$ and hence $R(A)\inc \ker (T_\F^*)=\ker (T_{\cF^\#}^*)$. Therefore, $T_\G=T_{\F^\#}+A$ and $$ T_\G^*\,T_\G= T_{\F^\#}^* T_{\F^\#} + T_{\F^\#}^* A + A^* T_{\F^\#} + A^*A= T_{\F^\#}^*T_{\F^\#}+A^*A$$ that is, $S_\G=S_{\F^\#}+B$, where $B=A^*A\in\matpos$. Since $R(A)\inc \ker (T_\F^*)$ then $\rk(B)=\rk(A)\leq n-d=d-m$ and $\|B\|=\|A^*A\|=\|T_\G-T_{\F^\#}\|^2\leq \eps^2$. Notice that $\tr(S_\G)=\sum_{j\in\In}\|g_j\|^2\geq t$ which shows that $\tr(B)=\tr(S_\cG)-\tr(S_{\cF^\#})\geq t-\tr(S_{\cF^\#})$.
Incidentally, notice that 
the existence of such a $B$ implies the restriction on $t$ of the statement, since 
\beq\label{t y eps}
\tr(B)\leq \rk(B)\cdot \|B\| 
\implies 
t-\tr(S_{\cF^\#})\leq \tr(B)\leq \min\{(d-m),\,d\}\cdot \eps^2 \ .
\eeq
In order to show the converse inclusion, let $B\in\matpos$ be such that $\tr(B)\geq t-\tr(S_{\cF^\#})$, $\|B\|\leq \eps^2$ and $\rk(B)\leq d-m=n-d=\dim \ker T_{\cF}^*$. Then, we can factor $B=A^*A$ where $A:\C^d\rightarrow \C^n$ is such that $R(A) \subseteq \ker (T_{\cF}^*)$, so that $T_{\cF}^*A=0$ and $A^*T_{\cF}=0$. Let $\cG=\{(T_{\cF^\#}+A)^* e_i \}_{i\in\In}$, where $\{e_i\}_{i\in\In}$ denotes the canonical basis of $\C^n$. Thus, $T_\cG=T_{\cF^\#}+A$ and hence $T_\cG^*T_\cF=I_\hil$, $S_\cG=S_{\cF^\#}+B$; in particular, $\sum_{i\in\In} \|g_i\|^2=\tr(S_\cG)=\tr(S_{\cF^\#})+\tr(B)\geq t$ and  $\|T_\cG-{T_\cF^\#}\|=\|A\|=\|B\|^{1/2}\leq \eps$. Therefore, $\cG\in \cD_{(t \coma \eps)}(\cF)$ is such that $S_{\cG}=S_{\cF^\#}+B$.
\end{proof}

\begin{rem}
With the notations of Proposition \ref{carac de ops frame duals}, by Eq. \eqref{t y eps} and a 
straightforward construction of a matrix $B$ with the required parameters,  we see that the relation
$$
t-\tr(S_{\F^\#})\leq \min\{(d-m),\,d\}\cdot \eps^2
$$ 
between the parameters $t$ and $\eps$ 
is necessary and sufficient for $\cD_{(t \coma\eps)}(\cF)\neq \emptyset$. \EOE 
\end{rem}

\subsection{Optimal perturbations by equivalent frames}\label{sec opt cohe pert}

Fix a frame $\cF=\{f_i\}_{i\in\In}$ for $\C^d$, with $n>d$. Hence, $\cF$ is a redundant family of linear generators for $\C^d$; in other words, $T_\cF:\C^d\rightarrow \C^n$ is an injective transformation such that $R(T_\cF)\subset \C^n$ is a proper subspace. These last facts can be used to develop some simple linear tests in order to check whether a sequence $\ca=(a_i)_{i\in\In}\in\C^n$ is the sequence of frame coefficients $T_\cF(f)=(\langle f,f_i\rangle )_{i\in\In}$ for some (unique) $f\in\hil$ or whether it has been corrupted (e.g. due to noise in the communication channel). 
Indeed, given a linear relation $\sum_{i\in\In}\alpha_i\, f_i=0$ 
of the family $\cF$, 
 we get a linear test $\varphi[(\langle f,f_i\rangle)_{i\in\In}]=\sum_{i\in\In}\overline{\alpha_i} \, \langle f,f_i\rangle=0$ for the sequence $\ca$; moreover, we can consider a complete set of tests $\varphi_1,\ldots,\varphi_{n-d}\in(\C^n)^*$ in order the check whether the sequence $\ca$ lies in $R(T_\cF)$, based on the linear relations of the family $\cF$.

\pausa
Hence, the linear relations among the elements of $\cF$ play an important role in this context. 
 On the other hand, the numerical stability of the frame $\cF$ is also an important feature in practice. Hence, there are situations in which we want to improve the stability of the frame $\cF$ (measured in terms of the spread the eigenvalues of its frame operator) while preserving the linear relations among the frame elements. It is well known that if a family $\cG=\{g_i\}_{i\in\In}$ has the same linear relations as $\cF$  then there exists an invertible linear operator $V\in \cG l(\hil)$ such that $\cG=V\cdot \cF=\{Vf_i\}_{i\in\In}$. In this case, following \cite{Bal} we say that $\cG=V\cdot \cF=\{Vf_i\}_{i\in\In}$ and $\cF$ are equivalent frames.

\pausa
As a first step, we can search for an invertible operator $V\in \cG l(\hil)$ such that 
the frame $V\cdot \cF$ is optimal with respect to the spread of the eigenvalues of its frame operator.
It turns out that the solution to this (unrestricted) problem is $V=S_{\cF}^{-1/2}$ so that 
$V\cdot \cF=\{S_\cF^{-1/2}f_i\}_{i\in\In}$ is the associated Parseval frame (with minimal spread of its eigenvalues). This solution, although optimal, might lie {\it away} from the original frame $\cF$ in the sense that it is a strong deformation of the space $\hil$. In case we want to consider rather {\it perturbations} of $\cF$, preserving their linear relations and yet improving its numerical performance, we can search for invertible operators $V$ such that the equivalent frame $V\cdot \cF$ is optimal in the previous sense, under some restrictions on $V$. Hence, given a frame $\cF=\{f_i\}_{i\in\In}$ we introduce the following set of controlled perturbations by equivalent frames: given $0<\delta<1$ and $s\in[(1-\delta)^d,(1+\delta)^d]$ then consider 
\beq \label{defi cp}
\cP_{(s \coma \delta)}(\cF)\igdef\{V\cdot\cF=\{Vf_i\}_{i\in\In}:\ V\in\cG l(\hil)\ , \ \|V^*V-I\|\leq \delta \ , \ \det(V^*V)\geq s\ \}\ . 
\eeq
Our main problem is to compute the structure optimal perturbations $V\cdot\cF\in \cP_{(s \coma \delta)}(\cF)$, in the sense that they minimize the spread of the eigenvalues of the frame operators within this class.	
Hence, in order to deal with this problem we also introduce 
\beq\label{defi scp} 
\cS(\cP_{(s \coma \delta)}(\cF))\igdef\{S_{\cG}: G\in\cP_{(s \coma \delta)}(\cF)\}
\eeq
 and it is straightforward to check that 
\beq \label{ident scp1}
\cS(\cP_{(s \coma \delta)}(\cF))=\{V^*S_\cF V:\ V\in\cG l(\hil)\ , \ \|V^*V-I\|\leq \delta \ , \ \det(V^*V)\geq s\ \}\ . 
\eeq
As we shall see, there exist $\prec_w$ minimizers within $\cS(\cP_{(s \coma \delta)}(\cF))$; moreover, their
spectral and geometrical features can be explicitly computed.

\section{Optimal perturbation of the canonical dual frame}\label{sec 4}

In this section, given a frame $\cF$, we consider a matrix model for the design of optimal perturbations of the canonical dual frame $\cF^\#$. In Section \ref{sec analisis dual}, using tools from matrix analysis, we show that there are optimal solutions within our abstract model. Then, in Section \ref{aplic dual} we apply these results to the initial frame design problem.

\subsection{A matrix model for $\cS(\cD_{(t \coma \eps)}(\cF))$}\label{sec analisis dual}

In order to deal with the problem of existence of $\prec_w$ minimizers in $\cS(\cD_{(t \coma \eps)}(\cF))$ for a frame $\cF$, we introduce the following set motivated by Proposition \ref{carac de ops frame duals}: 
given $S\in\matpos$, $t>t_0\igdef\tr(S)$, $\eps>0$ and $m\in\mathbb Z$ with $m\leq d-1$ such that $t-t_0\leq \min\{(d-m),\, d\}\cdot \eps$, we define
\beq \label{defi ut}
U_{(t \coma \eps)}(S,m)\igdef\{S+B:\ B\in\matpos\ , \ \tr(B)\geq t-t_0\ , \ \|B\|\leq \eps\ , \ \rk(B)\leq d-m \}\ .
\eeq
Hence, Proposition \ref{carac de ops frame duals} states (using the notations in that result) that 
$$\cS(\cD_{(t \coma \eps)}(\cF))=U_{(t \coma \eps^2)}(S_{\cF^\#}\coma m)\ . $$
In order to deal with the spectral structure of optimal elements in $U_{(t \coma \eps)}(S,m)$ the following (rather naive) vector model will be useful: 
given $\la\in(\R^d)\da$, $t>t_0\igdef\tr(\la)$, 
$\eps>0$ and $m\in\mathbb Z$ with $m\leq d-1$ such that $t-t_0\leq\min\{(d-m),\,d\}\cdot \eps$ we define $$
\Lambda_{(t \coma \eps)}(\la  \coma m)=\{ \la+\mu:\ 0\leq \mu_i\leq \eps \ , \ \tr \, \mu \geq t-t_0 \ , \ \text{supp}(\mu)\leq d-m\}\inc\R^d_{\geq 0} \ 
$$  where supp$(\mu)=\|\mu\|_0=\#\{j\in\IN{d}: \ \mu_j\neq 0\}$.

\begin{rem}\label{resultados previos1}
In \cite{MRS4} (also see \cite{MRS12,MRS pre}) we considered a simpler version of the set 
$\Lambda_{(t \coma \eps)}(\la  \coma m)$ defined above. Indeed, given $\lambda=(\la_i)_{i\in\IN{d}}\in (\R^d)^\downarrow$, $t\geq t_0=\tr(\la)$ we considered the set 
$$
\Lambda_t(\la)=\{\la+\mu:\ \mu\in \R_{\geq 0}^d \ , \ \tr(\mu)\geq t-t_0\ \}\ .
$$
In this context (see \cite{MRS4}) we showed that there exists a unique 
$\nuel\in \Lambda_t(\la)$ such that 
$$
\nuel=\nuel ^\downarrow \ \text{ and }\ \nuel\prec_w\ \nu \ \text{ for every } \ \nu\in \Lambda_t(\la)\, , 
$$ 
i.e., $\nuel$ is a $\prec_w$-minimizer in $\Lambda_t(\la)$.
Moreover, $\nuel$ can be explicitly computed as follows: let 
$h:[\lambda_d \coma \infty)\rightarrow \R_{\geq 0}$ be given by $h(x)=\sum_{i\in\IN{d}}(x-\lambda_i)^+$, where $\alpha^+$ stands for the positive part of $\alpha\in\R$.
Then, $h$ is a continuous and strictly increasing function in its domain, and for $t\geq t_0$ there exists a unique $c_\la(t)\in[\la_d \coma \infty)$ such that $h(c_\la(t))=t-t_0\,$. Then, with these notations we have that $ \nuel=(\la_i+(c_\la(t)-\la_i)^+)_{i\in\IN{d}}\in \R^d$ i.e. 
$$
\nuel=c_\la(t)\cdot \uno_d \ \ \ (\text{ if } \ c_\la(t)> \la_1) \ \text{ or } 
\ \nuel=(\la_1 \coma \ldots\coma \la_r\coma c_\la(t)\cdot \uno_{d-r}) 
\ \ \ (\text{ if } 
\ c_\la(t)\leq \la_1) 
$$
for some $r\in\IN{d}\,$. Notice that $\nuel=\nuel^\downarrow$ and $\tr(\nuel)=t$ in any case. Also 
\beq \label{sobre el orden1}
 \nuel - \la= \big(\,(c_\la(t)-\la_i)^+\big)_{i\in\IN{d}} \ \implies  \ 
 \nuel - \la=(\nuel - \la)\ua\ .
 \eeq
 Moreover, $\nuel$ is determined as the unique $\nu\in\Lambda_t(\la)$ such that $\nu$ is a $\prec_w$ minimizer in $\Lambda_t(\la)$ and such that $\nu-\la=(\nu-\la)\ua$.
\EOE

\pausa
The following lemma is a direct consequence of the results from \cite{MRS4} described in Remark \ref{resultados previos1} above. 

\end{rem}
\begin{lem}\label{nu unico}\rm
Let $\lambda=(\la_i)_{i\in\IN{d}}\in(\R^d)\da\,$, and 
$t>t_0\igdef \tr \, \la $.  Assume that $r\in \IN{d-1}$ and $c>0$ are such that the vector  
$\gamma \igdef (\la_1\coma \dots \coma \la_r\coma c\, \uno_{d-r}) $ satisfies that 
\beq\label{el nu}
\la_{r}\ge c\ge \la_{r+1}   \peso{(so that \ $\gamma =\gamma\da  \geqp \la$) \quad and} 
\tr \, \gamma = t\ .
\eeq Then, in this case we have that $c= c_\la(t)$ and $\ga = \nuel$.
\qed
\end{lem}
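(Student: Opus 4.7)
The plan is to reduce the statement directly to the explicit formula for $\nuel$ given in Remark \ref{resultados previos1}, using the uniqueness of the critical value $c_\la(t)$.

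First, I would observe that the hypotheses $\la_r \geq c \geq \la_{r+1}$ together with $\la \in (\R^d)\da$ force a sign pattern on the shifts $c - \la_i$: for $i \le r$ one has $\la_i \ge \la_r \ge c$, hence $(c - \la_i)^+ = 0$, while for $i > r$ one has $\la_i \le \la_{r+1} \le c$, hence $(c - \la_i)^+ = c - \la_i$. Substituting, the vector $(\la_i + (c - \la_i)^+)_{i\in\IN{d}}$ equals $(\la_1, \ldots, \la_r, c\,\uno_{d-r}) = \gamma$.

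Second, I would read off the value of the function $h$ at $c$. By the same case split,
\[
h(c) \;=\; \sum_{i\in\IN{d}}(c - \la_i)^+ \;=\; \sum_{i=r+1}^{d}(c - \la_i) \;=\; (d-r)\,c - \sum_{i=r+1}^{d}\la_i.
\]
On the other hand, the assumption $\tr \gamma = t$ reads $\sum_{i=1}^{r}\la_i + (d-r)\,c = t$, so $(d-r)\,c = t - \sum_{i=1}^{r}\la_i$, and therefore $h(c) = t - \sum_{i=1}^{d}\la_i = t - t_0$.

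Third, note that $c \ge \la_{r+1} \ge \la_d$, so $c$ lies in the domain $[\la_d, \infty)$ of $h$. Since Remark \ref{resultados previos1} asserts that $h$ is strictly increasing on that domain and $c_\la(t)$ is the unique point there with $h(c_\la(t)) = t - t_0$, we conclude $c = c_\la(t)$. Combining with the first step, $\gamma = (\la_i + (c_\la(t) - \la_i)^+)_{i\in\IN{d}} = \nuel$.

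This is essentially bookkeeping, so I do not anticipate a real obstacle; the only subtlety worth emphasizing is checking that the hypothesis $\la_r \ge c \ge \la_{r+1}$ is precisely what is needed to place $c$ in the domain of $h$ and to make the positive-part formula collapse to the explicit block form of $\gamma$. No strict convexity, majorization characterization, or the $(\nu - \la)\ua$ characterization from Remark \ref{resultados previos1} is required — uniqueness of $c_\la(t)$ in $[\la_d, \infty)$ suffices.
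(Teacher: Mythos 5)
Your proposal is correct and is precisely the elaboration the paper leaves implicit: the paper states the lemma without proof, calling it a direct consequence of Remark \ref{resultados previos1}, and your argument supplies exactly that consequence by checking that the block form of $\gamma$ matches the positive-part formula and that the trace condition forces $h(c)=t-t_0$, whence $c=c_\la(t)$ by strict monotonicity of $h$ on $[\la_d,\infty)$. No gap; your observation that only uniqueness of $c_\la(t)$ is needed (not the $(\nu-\la)\ua$ characterization) is also accurate.
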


\normalsize
\pausa
The following statement  finds  a minimum for submajorization in the set 
$\Lambda_{(t \coma \eps)}(\la  \coma 0)$. Note that  $\Lambda_{(t \coma \eps)}(\la  \coma m)
=  \Lambda_{(t \coma \eps)}(\la  \coma 0)$ for every $m\le0$.

\begin{teo}\label{mayo min mayo 2} 
Let $\la=(\la_i)_{i\in\IN{d}}\in(\R^d)\da$ and $t_0=\tr \,\la $. Let $\eps>0$ and 
$t\geq t_0$ be such that $t-t_0\leq d\cdot \eps$. 
Then there exists $\rho\in \Lambda_{(t \coma \eps)}(\la  \coma 0)$, 
such that 
$$
\tr(\rho)=t  \py \rho\prec_w \gamma  \peso{for every} \gamma\in \Lambda_{(t \coma \eps)}(\la  \coma 0) \ . 
$$
In this case we can choose a unique $\rho=\rho_{(t \coma \eps)}(\la,0)$ as above and such that $\rho-\la=(\rho-\la)\ua$. 
Moreover, this vector $\rho$ also satisfies that $\rho=\rho\da\in(\R^d)\da$. 
\end{teo}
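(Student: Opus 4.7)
My plan is to construct $\rho$ explicitly via a capped waterfilling procedure, verify the stated structural properties directly, and then establish $\prec_w$-minimality by showing that $\rho$ is a global minimizer of every convex potential on the feasible polytope.

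\emph{Construction and basic properties.} For each $c\in\R$ set $\rho(c)_i=\la_i+\min(\eps,(c-\la_i)^+)$. The map $c\mapsto\tr\,\rho(c)=t_0+\sum_i\min(\eps,(c-\la_i)^+)$ is continuous and non-decreasing, equal to $t_0$ for $c\le\la_d$ and to $t_0+d\eps$ for $c\ge\la_1+\eps$; since $t\in[t_0,t_0+d\eps]$ I pick $c^*\in\R$ with $\tr\,\rho(c^*)=t$ and set $\rho=\rho(c^*)$. Then $\rho-\la\in[0,\eps]^d$ with $\tr(\rho-\la)=t-t_0$, so $\rho\in\Lambda_{(t\coma\eps)}(\la\coma 0)$ and $\tr\,\rho=t$. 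Since $\la_i\mapsto\min(\eps,(c^*-\la_i)^+)$ is non-increasing in $\la_i$ and $\la=\la\da$, the vector $\rho-\la$ is non-decreasing in $i$, yielding $\rho-\la=(\rho-\la)\ua$. Splitting $\IN{d}=K\sqcup L\sqcup J$ with $K=\{i:\la_i\ge c^*\}$, $L=\{i:c^*-\eps\le\la_i<c^*\}$, $J=\{i:\la_i<c^*-\eps\}$, one has $\rho_i=\la_i$ on $K$, $\rho_i=c^*$ on $L$, $\rho_i=\la_i+\eps$ on $J$; a short boundary check across $K\to L\to J$ gives $\rho=\rho\da$.

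\emph{Minimality.} Fix $\gamma\in\Lambda_{(t\coma\eps)}(\la\coma 0)$. If $\tr\,\gamma>t$, pick $\gamma'=\la+\nu'$ with $0\le\nu'\le\gamma-\la$ componentwise and $\tr\,\nu'=t-t_0$; then $\gamma'\in\Lambda_{(t\coma\eps)}(\la\coma 0)$, $\tr\,\gamma'=t$, and $\gamma'\le\gamma$, so $(\gamma')\da\le\gamma\da$ componentwise, whence $\gamma'\prec_w\gamma$. It therefore suffices to show $\rho\prec_w\gamma$ when $\tr\,\gamma=t$, in which case $\prec_w$ coincides with $\prec$. By Hardy--Littlewood--Polya, $\rho\prec\gamma$ is equivalent to $\sum f(\rho_i)\le\sum f(\gamma_i)$ for every convex $f:\R\to\R$, and I establish this by showing $\rho$ is a global minimizer of $F(\gamma)=\sum f(\gamma_i)$ on the convex polytope $P=\{\gamma:\la\le\gamma\le\la+\eps\uno,\ \tr\,\gamma=t\}$. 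Since $F$ is convex it suffices to verify KKT at $\rho$: taking the Lagrange multiplier for the trace constraint equal to $f'(c^*)$, the multipliers of the active lower bounds on $K$ are $f'(\la_i)-f'(c^*)\ge 0$ (as $\la_i\ge c^*$ and $f'$ is non-decreasing), the multipliers of the active upper bounds on $J$ are $f'(c^*)-f'(\la_i+\eps)\ge 0$ (as $\la_i+\eps\le c^*$), and stationarity on $L$ is automatic. For non-smooth convex $f$ I use subgradients, or approximate by smooth convex functions and pass to the limit.

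\emph{Uniqueness and the main obstacle.} If $\rho'\in\Lambda_{(t\coma\eps)}(\la\coma 0)$ also satisfies the conclusions with $\rho'-\la=(\rho'-\la)\ua$, then $\rho'\prec_w\rho$ and $\rho\prec_w\rho'$ force $\tr\,\rho'=t$ and that $\rho,\rho'$ have the same multiset of values; the block form of this multiset ($\la_i$ on $K$, the constant $c^*$ on $L$, $\la_i+\eps$ on $J$), together with $\la=\la\da$, $\rho'\ge\la$, and $\rho'-\la$ non-decreasing, then forces the same block assignment, hence $\rho'=\rho$. I expect the technical heart of the argument to be the minimality step, specifically translating the KKT verification cleanly into a proof valid for the full class of convex $f$ appearing in the HLP characterization, and chaining it with the preliminary reduction from $\tr\,\gamma\ge t$ to $\tr\,\gamma=t$ (which rests on the standard fact that componentwise domination implies $\prec_w$ for non-negative vectors).
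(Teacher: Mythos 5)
Your proof is correct but takes a genuinely different route from the paper's. \emph{Construction:} you define $\rho$ in one shot by capped waterfilling, $\rho_i=\la_i+\min(\eps,(c^*-\la_i)^+)$ with the level $c^*$ fixed by the trace equation, which makes $\rho=\rho\da$ and $\rho-\la=(\rho-\la)\ua$ essentially immediate. The paper instead builds $\rho$ by a recursion that peels off saturated coordinates $\la_d+\eps,\la_{d-1}+\eps,\ldots$ until the unconstrained waterfilling vector $\nu(\la^{(m)}\coma t^{(m)})$ from \cite{MRS4} respects the cap, and then establishes $\rho=\rho\da$ by a separate comparison of the levels $c_{\la^{(m+1)}}(t^{(m+1)})$ and $c_{\la^{(m)}}(t^{(m)})$; the resulting vector is the same as yours. \emph{Minimality:} you reduce to the equal-trace case and then invoke the Hardy--Littlewood--P\'olya characterization together with a KKT verification that $\rho$ minimizes the separable convex potential $\sum_i f(\gamma_i)$ over the box-and-trace polytope, for every convex $f$. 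The paper instead checks the partial-sum inequalities of $\prec_w$ directly, splitting the index range at $m$ and quoting the pre-established $\prec_w$-minimality of $\nu(\la^{(m)}\coma t^{(m)})$ on the truncations for $k\le m$ and the cap $\mu_i\le\eps$ for $k>m$. Your route is cleaner and more self-contained as an optimization argument, at the cost of invoking HLP and a remark about nondifferentiable convex $f$ (which, as you note, is handled by subgradients or smoothing; one can also avoid it altogether by testing only the angle functions $f(x)=(x-c)^+$, which already generate the $\prec_w$ inequalities). The paper's route stays inside elementary majorization bookkeeping and deliberately reuses the explicit formula and algorithm for $\nu(\la\coma t)$ already developed in \cite{MRS4}.
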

\proof Assume first that $\la\in(\R_{\geq 0}^d)\da$. In what follows we consider the notations from Remark \ref{resultados previos1}. We shall construct the vector $\rho_{(t \coma \eps)}(\la,0)=\rho=(\rho_i)_{i\in\IN{d}}$ 
recursively as follows: 
\ben
\item If $c_\la(t)-\la_d \le \eps$ then just take $\rho = \nuel$. 
\item If $c_\la(t)-\la_d > \eps$ then we put $\rho_d = \la_d +\eps$ and we consider the following new data: 
$$
\la^{(d-1)} = (\la_1\coma \dots \coma \la_{d-1}) \py t^{(d-1)} = t -\rho_d \ \ (\implies 
t^{(d-1)} - \tr \, \la^{(d-1)}\le (d-1)\, \eps\  )\ .
$$
Then go back to the first step, but applied to the pair $(\la^{(d-1)}\coma t^{(d-1)}\,)$. \EOE
\een
The hypothesis that $t-t_0\leq d\cdot \eps$ assures that this process stops (at 
some step $d-m+1\in \IN{d}$) obtaining the outcome $\rho=(\nu(\la^{(m)}\coma t^{(m)}),\la_{m+1}+\eps,\ldots,\la_d+\eps)\in \R_{\geq 0}^d$. That is, 
\beq\label{el rho}
\rho = (\la_1 \coma \dots \la_s \coma c\, \uno_{m-s} \coma \la_{m+1}+\eps \coma \dots \coma \la_d+\eps) \peso{with}
 0\le c-\la_m \le \eps 
\eeq
and $\la_{s+1} \le c<\la_s $ 
in case $(\la_1 \coma \dots \la_s \coma c\, \uno_{m-s}\, )  = \nu(\la^{(m)}\coma t^{(m)})$ ,  or 
\beq\label{el rho2c}
\rho = ( c\, \uno_{m} \coma \la_{m+1}+\eps \coma \dots \coma \la_d+\eps) 
\peso{with}
 0\le c-\la_m \le \eps  \, ,
\eeq in case $\nu(\la^{(m)}\coma t^{(m)}) = c\, \uno_m\,$.
It is clear that this $\rho \in \Lambda_{(t \coma \eps)}(\la  \coma 0) $ and that $\tr \, \rho = t$. 
Moreover, we claim that $\rho = \rho\da$: notice that we only need to show that $c \ge \la_{m+1}+\eps$, where $c=c_{\la^{(m)}} (t^{(m)})$.  

\pausa
Indeed, since the algorithm did not stop at the pair $(\la^{(m+1)},t^{(m+1)})$
then $c_{\la^{(m+1)}} (t^{(m+1)}) > \la_{m+1}+\eps$. 
By Remark \ref{resultados previos1} and Lemma \ref{nu unico}, it is easy to see that 
$$ c_{\la^{(m+1)}} (t^{(m+1)})=c_{\la^{(m)}} (t^{(m+1)}- c_{\la^{(m+1)}} (t^{(m+1)}))\ .$$
Moreover, since $c_{\la^{(m)}} (x)$ is an increasing function then
$$ c_{\la^{(m)}} (t^{(m+1)}- c_{\la^{(m+1)}} (t^{(m+1)}))\le  c_{\la^{(m)}} \big(\, t^{(m+1)}- (\la_{m+1}+\eps)\, \big) =   
c_{\la^{(m)}} (t^{(m)}) = c \ . 
$$
Hence, $ \la_{m+1}+\eps \le c_{\la^{(m+1)}} (t^{(m+1)})  \le c$, which shows that $\rho = \rho\da$.  

\pausa
Fix $\ga \in \Lambda_{(t \coma \eps)}(\la  \coma 0) $ such that $\gamma=\la+\mu$, where $\mu=(\mu_i)_{i\in\IN{d}}$ is such that $0\leq \mu_i\leq \eps$ for $i\in \IN{d}$ and $\tr(\mu)\geq t-\tr(\la)$. Then, with the notation of Remark 
\ref{resultados previos1}, the truncation 
$(\ga_1 \coma \dots \coma \ga_m) \in 
\Lambda_{t^{(m)}}(\la^{(m)} )$ because $\la_i\leq \ga_i$ for $i\in \IN{m}$ and 
$$
\suml _{i=m+1}^d \ga_i -\la_i \le (d-m)\, \eps = \suml _{i=m+1}^d \rho_i -\la_i
\implies 
\tr \, (\ga_1 \coma \dots \coma \ga_m) \ge \tr \,(\rho_1 \coma \dots \coma \rho_m)=  t^{(m)} \ . 
$$
By construction  $(\rho_1 \coma \dots \coma \rho_m)  = \nu(\la^{(m)}\coma t^{(m)})
  \prec_w (\ga_1 \coma \dots \coma \ga_m)$. 
Hence, we get 
\beq \label{desig mayo3}
\sum_{i=1}^k \rho_i\leq \sum_{i=1}^k \gamma_i \peso{for every} 
 k\in\IN{m}\ .
\eeq
On the other hand, if $m+1\leq k\leq d$ then 
\beq \label{desig mayo4}
\sum_{i=1}^k \rho_i=t-\sum_{i=k+1}^d \rho_i=t-\sum_{i=k+1}^d (\la_i+\eps)\leq \tr(\gamma)-\sum_{i=k+1}^d (\la_i+\mu_i)
=\sum_{i=1}^k \gamma_i \ ,
\eeq since $0\leq \mu_i\leq \eps$ for $i\in\IN{d}\,$. Thus, Eqs. \eqref{desig mayo3} and \eqref{desig mayo4} imply that $\rho\prec_w\gamma$, since $\rho=\rho\da$ (even when the entries of $\gamma$ are not necessarily arranged in non-increasing order). 
The fact that 
$$\rho-\la = (\nu(\la^{(m)}\coma t^{(m)}) -\la^{(m)}\coma \eps\cdot \uno_{d-m}) =
(\rho-\la)\ua$$ 
follows from Eq. \eqref{sobre el orden1} and that $c-\la_m \le \eps$. 

\pausa
Now, assume that  $\la_d < 0$.  
Take some  $s > -\la_d\,$, so that the translated vector 
$\la+s\, \cdot \, \uno_d \in (\R_{\geq 0}^d)\da$. Let 
$\rho=\rho_{(t+d\,\cdot\, s\coma  \eps)}(\la+s\, \cdot \, \uno_d\coma 0)$ 
be the $\prec_w$-minimizer in 
$\Lambda_{(t+d\,\cdot \,s\coma \eps)}(\la+s\cdot \uno_d \coma 0)$ 
constructed in the first part of this proof. Since 
$\Lambda_{(t+d\,\cdot \,s\coma \eps)}(\la+s\cdot \uno_d \coma 0)
=\Lambda_{(t\coma \eps)}(\la  \coma 0)+s\cdot \uno_d$ we see that $\rho-s\cdot \uno_d\in \Lambda_{(t\coma \eps)}(\la  \coma 0)$ is a $\prec_w$-minimizer in $\Lambda_{(t\coma \eps)}(\la  \coma 0)$.

\pausa
Finally, assume that $\rho'\in \Lambda_{(t\coma \eps)}(\la  \coma 0)$ is a $\prec_w$-minimizer in 
$\Lambda_{(t\coma \eps)}(\la  \coma 0)$ and such that $\rho'-\la=(\rho'-\la)\ua$. 
If $\rho=\rho_{(t \coma \eps)}(\la,0)$ is as before, then: in case $\rho_d=\la_d+\eps$ 
(respectively $\rho'_d=\la_d+\eps$) then it is easy to see that also $\rho'_d=\la_d+\eps$ 
(respectively $\rho_d=\la_d+\eps$); this observation allows reduce the problem of uniqueness 
of $\rho$ to the case in which $\rho_d<\eps$ and $\rho'_d< \eps$, where uniqueness of $\rho$ 
follows from the comments after Eq. \eqref{sobre el orden1} in Remark \ref{resultados previos1}.
\QED

\begin{rem}\label{se calcula efec 0}
With the notations of Theorem \ref{mayo min mayo 2}, notice that the proof 
of that result shows an explicit and simple algorithm that computes the 
optimal vector $\rho$ in terms of the vector $\nu(\tilde \la\coma  \tilde t)$
described in Remark \ref{resultados previos1}, for an explicit 
(and computable) $\tilde \la\in \R^r$ and $\tilde t\in \R$. Since 
$\nu(\tilde \la\coma  \tilde t)$ can be also computed in terms of a 
simple algorithm (as described in Remark \ref{resultados previos1}), we see 
that the optimal vector $\rho$ can be effectively computed (with a 
fast algorithm).
\EOE 
\end{rem}

\pausa
The following result complements Theorem \ref{mayo min mayo 2}.
\begin{teo}\label{cor mayo min mayo 2}
Let $\la\in(\R^d)\da$ and $t_0=\tr \,\la $. Let $\eps>0$, $m\in\mathbb Z$ with $m\leq d-1$ and 
let $t\geq t_0$ be such that $t-t_0\leq \min\{(d-m),\,d\}\cdot \eps$. 
Then there exists $\rho\in \Lambda_{(t \coma \eps)}(\la  \coma m)$, 
such that 
$$
\tr(\rho)=t  \py \rho\prec_w \gamma  \peso{for every} \gamma\in \Lambda_{(t \coma \eps)}(\la  \coma m) \ . 
$$
Moreover, we can choose a unique $\rho=\rho_{(t \coma \eps)}(\la,m)$ as above such that $\rho-\la=(\rho-\la)\ua$ (although the entries of $\rho$ may not be arranged in non-increasing order).
\end{teo}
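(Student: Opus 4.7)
The plan is to reduce to Theorem \ref{mayo min mayo 2} by treating the first $m$ coordinates as inert. If $m\le 0$ the support constraint $\|\mu\|_0\le d-m$ is automatic, so $\Lambda_{(t,\eps)}(\la,m)=\Lambda_{(t,\eps)}(\la,0)$ and the conclusion is exactly Theorem \ref{mayo min mayo 2}. Assume from now on that $m\ge 1$ and put $\tilde\la=(\la_{m+1},\dots,\la_d)\in(\R^{d-m})\da$ and $\tilde t= t-\sum_{i=1}^m \la_i$. Then $\tilde t-\tr(\tilde\la)=t-t_0\le (d-m)\eps$, which is the admissibility hypothesis of Theorem \ref{mayo min mayo 2} for the data $(\tilde\la,\tilde t,\eps)$; that theorem yields a $\prec_w$-minimizer $\tilde\rho=\rho_{(\tilde t,\eps)}(\tilde\la,0)\in\Lambda_{(\tilde t,\eps)}(\tilde\la,0)$ with $\tr(\tilde\rho)=\tilde t$ and $\tilde\rho-\tilde\la=(\tilde\rho-\tilde\la)\ua$. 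I then take $\rho=(\la_1,\dots,\la_m,\tilde\rho_1,\dots,\tilde\rho_{d-m})$. A direct check from the constraints on $\tilde\rho$ shows that $\rho\in\Lambda_{(t,\eps)}(\la,m)$, that $\tr\rho=t$, and that $\rho-\la=(0,\dots,0,\tilde\rho-\tilde\la)$ is non-decreasing, i.e. $\rho-\la=(\rho-\la)\ua$.

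The heart of the argument is to show $\rho\prec_w\ga$ for every $\ga=\la+\nu\in\Lambda_{(t,\eps)}(\la,m)$. I accomplish this in two moves. First I reduce to the case $\mathrm{supp}(\nu)\subseteq\{m+1,\dots,d\}$ via a $T$-transform argument. If $\nu_i>0$ for some $i\le m$, then since $|\mathrm{supp}(\nu)|\le d-m$ there is some $j\ge m+1$ with $\nu_j=0$; the vector $\ga'$ obtained from $\ga$ by setting $\nu'_i=0$ and $\nu'_j=\nu_i$ (and leaving the remaining entries of $\nu$ unchanged) still lies in $\Lambda_{(t,\eps)}(\la,m)$, and writing $s=\nu_i/(\la_i+\nu_i-\la_j)\in[0,1]$ (note $s\le 1$ because $\la_j\le\la_i$, since $i\le m<j$) one verifies $\ga'=\bigl((1-s)I+s\,P_{ij}\bigr)\ga$. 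The matrix $(1-s)I+s\,P_{ij}$ is a $T$-transform, hence doubly stochastic, so $\ga'\prec\ga$ and in particular $\ga'\prec_w\ga$. Iterating at most $m$ times (each step strictly decreases $|\mathrm{supp}(\nu)\cap\{1,\dots,m\}|$) produces $\ga^*\in\Lambda_{(t,\eps)}(\la,m)$ with $\mathrm{supp}(\ga^*-\la)\subseteq\{m+1,\dots,d\}$ and $\ga^*\prec_w\ga$. Second, for such $\ga^*$ I can write $\ga^*=(\la_1,\dots,\la_m,\tilde\ga^*)$ with $\tilde\ga^*\in\Lambda_{(\tilde t,\eps)}(\tilde\la,0)$, so Theorem \ref{mayo min mayo 2} gives $\tilde\rho\prec_w\tilde\ga^*$; the elementary lemma that prepending a common vector preserves $\prec_w$ (easily proved via the max-sum characterization $\sum_{i\le k}x\da_i=\max_{|I|=k}\sum_{i\in I}x_i$) then lifts this to $\rho\prec_w\ga^*\prec_w\ga$.

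For uniqueness of $\rho$ subject to $\rho-\la=(\rho-\la)\ua$: any such minimizer $\rho'$ must have $\rho'_i=\la_i$ for $i\le m$, because a non-decreasing vector with at most $d-m$ positive entries forces its $m$ zeros into the initial block; then $(\rho'_{m+1},\dots,\rho'_d)$ is a $\prec_w$-minimizer in $\Lambda_{(\tilde t,\eps)}(\tilde\la,0)$ satisfying the corresponding condition, so by Theorem \ref{mayo min mayo 2} it equals $\tilde\rho$ and $\rho'=\rho$. The main technical point requiring care is the $T$-transform reduction: verifying that the swap is a genuine convex combination of $I$ and $P_{ij}$, and that iterating it preserves all four defining constraints of $\Lambda_{(t,\eps)}(\la,m)$ while strictly decreasing the integer progress measure $|\mathrm{supp}(\nu)\cap\{1,\dots,m\}|$.
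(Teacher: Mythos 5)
Your proof is correct, and it follows the same global decomposition strategy as the paper (peel off the inert first $m$ coordinates and feed the tail into Theorem \ref{mayo min mayo 2}), but the crucial reduction step is handled by a genuinely different mechanism. The paper disposes of an arbitrary $\gamma=\la+\mu$ in a single stroke by invoking the vector form of Lidskii's additive inequality, $\la+\mu^{\uparrow}\prec\gamma$, and then uses $\mathrm{supp}(\mu)\le d-m$ to observe that $\mu^{\uparrow}$ must begin with $m$ zeros, so the problem decomposes; the block‐wise submajorization fact from Remark \ref{rem doublystohasctic} then finishes it. You instead build the reduction by hand: an explicit iterated $T$-transform argument that, at each step, transports one positive entry of $\gamma-\la$ supported in $\{1,\dots,m\}$ onto a vanishing tail coordinate $j>m$ (which must exist by the support bound), each swap giving a new admissible vector $\gamma'\prec\gamma$ and strictly decreasing $|\mathrm{supp}(\nu)\cap\{1,\dots,m\}|$. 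You also give a self-contained proof of the ``prepend a common block'' lemma via the $\max$-sum characterization of partial sums, where the paper appeals to Remark \ref{rem doublystohasctic}. Both routes are elementary and correct; the paper's is more compact, while your $T$-transform version is more transparent about exactly how the support constraint is exploited and avoids quoting Lidskii as a black box. The uniqueness argument you give is the same in substance as the paper's (force the first $m$ entries of $\rho'-\la$ to vanish, then invoke uniqueness from Theorem \ref{mayo min mayo 2} on the tail); like the paper, you do not spell out why the tail of $\rho'$ inherits $\prec_w$-minimality in $\Lambda_{(\tilde t,\eps)}(\tilde\la,0)$, but this is the same ``it is easy to see'' gap the paper itself leaves, and is standard.
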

\proof
If we assume that $m\leq 0$ then we let 
$\rho_{(t \coma \eps)}(\la,m)\igdef\rho_{(t \coma \eps)}(\la\coma 0)$. 
Since in this case
$\Lambda_{(t \coma \eps)}(\la  \coma m)=\Lambda_{(t \coma \eps)}(\la  \coma 0)$, Theorem \ref{mayo min mayo 2} implies that  
$\rho_{(t \coma \eps)}(\la\coma m)$ has the desired properties.

\pausa
Assume now that $m\in\IN{d-1}$. Set 
$\tilde \la=(\la_{m+1}\coma \ldots\coma \la_d)\in(\R^{d-m})\da$, 
$s = t- \sum_{i-1}^m \, \la_i \, $, 
and consider $\rho_{(s\coma\eps)}(\tilde \la\coma 0)\in 
\Lambda_{(s\coma \eps)}(\tilde\la  \coma 0)\inc \R^{d-m}$ as in Theorem \ref{mayo min mayo 2}. Then we define 
\beq\label{elrho}
\rho = \rho_{(t\coma \eps)}( \la\coma m)\igdef
(\la_1\coma \ldots\coma \la_m \coma \rho_{(s\coma \eps)}(\tilde \la\coma 0))
\in \R^d \ .
\eeq
Clearly, $\rho
\in \Lambda_{(t\coma \eps)}(\la  \coma m)$ and $\tr ( \rho) = t$.
Let $\gamma=\la +\mu\in \Lambda_{(t\coma \eps)}(\la  \coma m)$, so that 
$0\leq \mu_i\leq \eps$ for $i\in \IN{d}\,$, supp$(\mu)\leq d-m$ and 
$\tr(\mu)\geq t-t_0\,$.
By Lidskii's (additive) inequality we get that $\la+\mu\ua\prec\gamma$. 

\pausa
But $\mu\ua=(0\cdot \uno_m \coma \tilde \mu)$, where 
$\tilde \mu=(\tilde \mu_i)_{i\in\IN{d-m}}\in (\R^{d-m})\ua$ is such 
that $0\leq \tilde \mu_i\leq \eps$ for $i\in\IN{d-m}$ and 
$\tr(\tilde\mu)=\tr(\mu)\geq t-t_0\, = s - \tr (\tilde \la )$. 
Hence, $\tilde \la+\tilde\mu\in 
\Lambda_{(s \coma \eps)}(\tilde \la  \coma 0)$ and therefore 
$\rho_{(s \coma \eps)}(\tilde \la \coma 0)\prec_w\tilde \la+\tilde\mu$, 
by Theorem \ref{mayo min mayo 2}. Thus, using the fact that we 
have submajorization by blocks as in Remark \ref{rem doublystohasctic}, 
$$
\rho_{(t \coma \eps)}(\la \coma m)
=(\la_1\coma \ldots \coma \la_m\coma \rho_{(s \coma \eps)}(\tilde \la \coma 0)) 
\prec_w (\la_1\coma \ldots \coma \la_m\coma \tilde \la 
+\tilde \mu)=\la+\mu\ua\prec \gamma\ .
$$
Finally, notice that the vector $\rho$ as defined in Eq. 
\eqref{elrho}  
may be not arranged in non-increasing order. 
Nevertheless, according to Theorem \ref{mayo min mayo 2} 
$$
\rho_{(t \coma \eps)}(\la  \coma m)-\la=(0\cdot \uno_{m}\coma 
\rho_{(s \coma \eps)}(\tilde \la \coma 0)-\tilde \la)\in (\R^d)\ua
\ . 
$$
Let $\rho'\in\Lambda_{(t \coma \eps)}(\la  \coma m)$ be a $\prec_w$-minimizer 
in $\Lambda_{(t \coma \eps)}(\la  \coma m)$ and such that $\rho'-\la=(\rho'-\la)\ua$. Then it is easy to see that 
$\rho'=(\la_1\coma \ldots \coma \la_m\coma \rho'')$, where 
$\rho''\in \Lambda_{(s \coma \eps)}(\tilde \la  \coma 0)$ 
is a $\prec_w$ minimizer in $\Lambda_{(s \coma \eps)}(\tilde \la  \coma 0)$ 
and such that $\rho''-\tilde\la=(\rho''-\tilde\la)\ua$. Hence, by 
Theorem \ref{mayo min mayo 2}, we conclude that 
$\rho''=\rho_{(s \coma \eps)}(\tilde \la \coma 0)$ and therefore 
$\rho'=\rho_{(t \coma \eps)}(\la \coma m)$.
\QED

\pausa
Based on Lidskii's inequality, Theorem \ref{cor mayo min mayo 2} allows to compute the structure of optimal elements in 
$U_{(t \coma \eps)}(S,m)$ from its rather naive model in $\R^d$. 

\begin{teo} \label{teo para ops}
Let $S\in\matpos$, $t>t_0\igdef\tr(S)$, $\eps>0$ and $m\in\mathbb Z$ with $m\leq d-1$ such that $t-t_0\leq \min\{(d-m),\,d\}\cdot \eps$. 
Let $\la=\la(S)$ and $\rho=\rho_{(t \coma \eps)}(\la \coma m)\in 
\Lambda_{(t \coma \eps)}(\la\coma m)$ be the $\prec_w$ minimizer from Theorem \ref{cor mayo min mayo 2}. Then,
\ben
\item There exists $S_0\in U_{(t \coma \eps)}(S,m)$ such that $\la(S_0)=\rho\da$; 
\item $S_1\in U_{(t \coma \eps)}(S,m)$ is such that $\la(S_1)\prec_w \la(S')$ for every $S'\in U_{(t \coma \eps)}(S,m)  \iff 
\la(S_1)=\rho\da$. 
\item If $S+B\in U_{(t \coma \eps)}(S,m)$ is such that $\la(S+B)=\rho\da$ then
there exists an o.n.b. $\{v_j\}_{j\in\IN{d}}$ for $\C^d$ 
such that, with the notations of \eqref{tensores}, 
$$
S=\sum_{j\in\IN{d}} \lambda_j(S)\ v_j\otimes v_j \peso{and} B= \sum_{j\in\IN{d}} \lambda_{d-j+1}(B) \ v_j\otimes v_j \ .
$$
\een
\end{teo}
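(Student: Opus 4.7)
The plan is to handle the three claims in turn, using Theorem \ref{cor mayo min mayo 2} as the combinatorial/arithmetic backbone and additive Lidskii (plus its equality case from \cite{MRS12}) as the matrix-analytic backbone.

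\textbf{Part (1) — existence.} I would build $S_0$ explicitly by diagonalizing $S$. Fix an orthonormal basis $\{v_j\}_{j\in\IN{d}}$ with $Sv_j=\lambda_j(S)\,v_j$, set $\mu\igdef\rho-\lambda$, and define $B\igdef \sum_{j\in\IN{d}}\mu_j\,v_j\otimes v_j$. By the construction of $\rho$ in the proof of Theorem \ref{cor mayo min mayo 2}, we have $\mu\ge 0$, $\mu=\mu\ua$, $\tr\mu = t-t_0$, and (when $m\ge 1$) $\mu_i=0$ for $i\in\IN{m}$ while for $m\le 0$ the rank bound is vacuous. Hence $B\in\matpos$ with $\|B\|=\mu_d\le\eps$, $\rk B\le d-m$ and $\tr B=t-t_0$, so $S_0\igdef S+B\in U_{(t\coma\eps)}(S,m)$; since $S_0=\sum_j \rho_j\,v_j\otimes v_j$, its multiset of eigenvalues is $\{\rho_j\}_{j\in\IN{d}}$, i.e.\ $\la(S_0)=\rho\da$.

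\textbf{Part (2) — characterization.} For the forward direction, let $S'=S+B'\in U_{(t\coma\eps)}(S,m)$ and set $\tilde\mu\igdef\la\ua(B')$. The hypotheses on $B'$ translate term by term into $\la+\tilde\mu\in\Lambda_{(t\coma\eps)}(\la\coma m)$, so Theorem \ref{cor mayo min mayo 2} yields $\rho\prec_w\la+\tilde\mu$. The additive Lidskii inequality gives $\la+\tilde\mu=\la\da(S)+\la\ua(B')\prec\la(S')$, and composing submajorizations we obtain $\rho\prec_w\la(S')$; this proves ``$\Leftarrow$''. For the converse, if $S_1$ is a $\prec_w$-minimizer then $\la(S_1)\prec_w\la(S_0)=\rho\da$ by the previous paragraph applied to $S_0$, while $\rho\da\prec_w\la(S_1)$ follows from the forward direction applied to $S'=S_1$. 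Mutual submajorization of two sorted vectors forces all partial sums to coincide, whence $\la(S_1)=\rho\da$.

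\textbf{Part (3) — geometry.} Assume $S+B\in U_{(t\coma\eps)}(S,m)$ with $\la(S+B)=\rho\da$, and set $\tilde\mu=\la\ua(B)$. From $\tr(S+B)=\tr\rho\da=t$ we deduce $\tr B=t-t_0$, so Lidskii becomes a full majorization $\la+\tilde\mu\prec\rho\da$; combined with $\rho\prec_w\la+\tilde\mu$ from Part (2) the chain $\rho\prec_w\la+\tilde\mu\prec\rho\da$ forces all partial sums of the sorted versions to agree, giving $(\la+\tilde\mu)\da=\rho\da$. In particular $\la+\tilde\mu$ is itself a $\prec_w$-minimizer in $\Lambda_{(t\coma\eps)}(\la\coma m)$, and since $(\la+\tilde\mu)-\la=\tilde\mu=\tilde\mu\ua$, the uniqueness clause of Theorem \ref{cor mayo min mayo 2} upgrades the multiset equality to the componentwise identity $\la+\tilde\mu=\rho$. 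Thus $\la\da(S)+\la\ua(B)=\rho$ as vectors, and sorting both sides yields equality in the additive Lidskii inequality $\la\da(S)+\la\ua(B)\prec\la(S+B)$. Applying the case-of-equality characterization from the appendix of \cite{MRS12} produces a common orthonormal eigenbasis $\{v_j\}_{j\in\IN{d}}$ with $Sv_j=\lambda_j(S)\,v_j$ and $Bv_j=\lambda_{d-j+1}(B)\,v_j$, which is exactly the claimed simultaneous spectral decomposition.

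The main obstacle is Part (3): one must bootstrap a triple submajorization chain up to a genuine componentwise equality of the vectors $\la+\tilde\mu$ and $\rho$. The key leverage is the uniqueness clause in Theorem \ref{cor mayo min mayo 2} (which singles out $\rho$ among all $\prec_w$-minimizers by the constraint $\rho-\la=(\rho-\la)\ua$); once that is in hand, invoking the additive-Lidskii equality case from \cite{MRS12} is essentially formal.
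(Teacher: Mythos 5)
Your proposal is correct and follows essentially the same route as the paper: build $S_0$ by diagonalizing $S$ and adding $B=\sum_j\mu_j\,v_j\otimes v_j$ with $\mu=\rho-\lambda$, use additive Lidskii together with $\lambda+\lambda\ua(B')\in\Lambda_{(t\coma\eps)}(\lambda\coma m)$ and Theorem \ref{cor mayo min mayo 2} for the $\prec_w$-minimality, and invoke the equality case of additive Lidskii from \cite[Theorem 8.8]{MRS12} for the geometric description. The only difference is a small, harmless detour in Part (3): you first upgrade $(\lambda+\tilde\mu)\da=\rho\da$ to the componentwise identity $\lambda+\tilde\mu=\rho$ via the uniqueness clause, whereas the paper applies the Lidskii-equality characterization directly to $\lambda(S+B)=(\lambda+\lambda\ua(B))\da$; your extra step is correct but not needed for the conclusion.
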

\begin{proof}
Let $\{w_j\}_{j\in\IN{d}}$ be an o.n.b. for $\C^d$ such that 
$S=\sum_{j\in\IN{d}} \lambda_j\ w_j\otimes w_j$, where $\la(S)=\la=(\la_i)_{i\in\IN{d}}\,$.
Let $\rho=\rho_{(t \coma \eps)}(\la\coma m)$ be as in Theorem \ref{cor mayo min mayo 2} and let $\mu=\rho-\la$, so that $\mu=\mu\ua$. 
Let $B_0=\sum_{j\in\IN{d}} \mu_j\ w_j\otimes w_j$ and notice that then $B_0\in\matpos$, $\|B_0\|\leq \eps$, $\tr(B_0)=t-\tr(S)$ and $\rk(B_0)=$supp$(\mu)\leq m-d$, so that $S+B_0\in U_{(t \coma \eps)}(S,m)$. Moreover, by construction $\lambda(S+B_0)=\rho\da$. 

\pausa 
Let $S'\in U_{(t\coma \eps)}(S\coma m)$ and let $B\in\matpos$, $\|B\|\leq \eps$, $\rk(B)\leq d-m$ and $\tr(B)\geq t-\tr(S)$ so that 
$S'=S+B$. By Lidskii's additive inequality we conclude that $\lambda+\lambda(B)\ua\prec \lambda(S')$.
Now, by the hypothesis on $B$ we conclude that $\lambda+\lambda(B)\ua\in\Lambda_{(t \coma \eps)}(\la\coma m)$. Hence 
$$ \rho\prec_w \lambda(S)+\lambda(B)\ua\prec \lambda(S')\ . $$ 
Hence, if $S_1\in U_{(t\coma \eps)}(S\coma m)$ is such that $\la(S_1)=\rho\da$ then $\la(S_1)\prec_w\la(S')$ for every $S'\in
U_{(t\coma \eps)}(S\coma m)$. Conversely, if  $S_1\in U_{(t\coma \eps)}(S\coma m)$ is such that 
$\la(S_1)\prec_w\la(S')$ for every $S'\in
U_{(t\coma \eps)}(S\coma m)$ then $\la(S_1)\prec_w\la(S_0)\prec_w\la(S_1)$ implies that $\la(S_1)=\la(S_0)=\rho\da$.

\pausa 
Finally, if $S+B\in U_{(t \coma \eps)}(S,m)$ is such that $\la(S+B)=\rho\da$ then
by Lidskii's additive inequality, the fact that $\lambda(S)+\lambda(B)\ua\in\Lambda_{(t \coma \eps)}(\la\coma m) $  and Theorem \ref{cor mayo min mayo 2} we see that $$
\lambda+\lambda(B)\ua\prec_w\lambda(S+B)=\rho\da\prec_w \lambda
+\lambda(B)\ua \implies \la(S+B)= (\lambda+\lambda(B)\ua )\da\ .
$$ 
Hence, the existence of the o.n.b. $\{v_j\}_{j\in\IN{d}}$ 
of item 3 
is a consequence of \cite[Theorem 8.8]{MRS12} (the case of equality in Lidskii's additive inequality). 
\end{proof}

\begin{rem}\label{se calcula efectivo}
The proof of Theorem \ref{cor mayo min mayo 2} together with Remark \ref{se calcula efec 0} show that 
the vector $\rho=\rho_{(t \coma \eps)}(\la,m)\in \Lambda_{(t \coma \eps)}(\la\coma m)$ as in the statement of Theorem \ref{teo para ops} can be explicitly computed in terms of a simple (and fast) algorithm depending on $t$, $\eps$, $\la$ and $m$.
\EOE \end{rem}

\pausa

\subsection{Computation of optimal perturbations of the canonical dual frame}\label{aplic dual}

\begin{nota}\label{let F}
Let $\F=\{f_j\}_{j\in\In}$ be a frame for $\C^d$ and set $m=2d-n$. Let 
$\eps>0$ and $t> \tr(S_{\F^\#})$  be such that $t-\tr(S_{\F^\#})\leq \min\{(d-m),\,d\}\cdot \eps^2$. Recall that Proposition \ref{carac de ops frame duals} and Eq. \eqref{defi ut} imply that 
\beq\label{la =}
\cS(\cD_{(t \coma \eps)}(\cF))=U_{(t \coma \eps^2)}(S_{\cF^\#}\coma m)\ . 
\eeq
\end{nota}
\begin{teo}\label{duals opt}
Consider the Notations 
\ref{let F}.  
Let $\la=\la(S_{\cF^\#})$ and $\rho=\rho_{(t\coma \eps^2)}(\la\coma m) $ be the $\prec_w$ minimizer 
for $ \Lambda_{(t \coma \eps^2)}(\la\coma m)$ given in  Theorem \ref{cor mayo min mayo 2}. Then,
\ben
\item  There exists $\cG_0\in \cD_{(t \coma \eps)}(\cF)$ such that $\la(S_{\cG_0})=\rho\da$.

\item Fix any $\cG\in \cD_{(t \coma \eps)}(\cF)$. Then this $\cG$ satisfies that 
$$S_{\cG}\prec_w S_{\cG'} \peso{for every} \cG'\in\cD_{(t \coma \eps)}(\cF) \iff \la(S_{\cG})=\rho\da
\ .
$$
\item Given $\G \in \cD_{(t \coma \eps)}(\cF)$,   then $\la(S_{\G})=\rho\da 
\iff \cG=\{f_i^\#+k_i\}_{i\in\In}$ for some $\cK=\{k_i\}_{i\in\In}$ such that $T_\cF^*\,T_\cK=0$ 
and such there exists an o.n.b. $\{v_j\}_{j\in\IN{d}}$ for $\C^d$ with
\beq \label{eq sk}
S_{\F^\#}=\sum_{j\in\IN{d}} \lambda_j\ v_j\otimes v_j \peso{and} S_{\cK}= \sum_{j\in\IN{d}} (\rho-\la)_j\ v_j\otimes v_j \ .
\eeq
In this case $S_\cG=S_{\cF^\#}+S_{\cK}\,$; in particular, $S_{\F^\#}$ and $S_{\G}$ commute.
\een
\end{teo}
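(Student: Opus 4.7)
The plan is to deduce everything from the matrix-model result Theorem \ref{teo para ops} via the identification recorded in \eqref{la =}, namely $\cS(\cD_{(t,\eps)}(\cF)) = U_{(t,\eps^2)}(S_{\cF^\#},m)$. Concretely, I apply Theorem \ref{teo para ops} to $S = S_{\cF^\#}$ with $\eps$ replaced by $\eps^2$. Item 1 of Theorem \ref{teo para ops} produces some $S_0 \in U_{(t,\eps^2)}(S_{\cF^\#},m)$ with $\la(S_0) = \rho\da$; by \eqref{la =} this $S_0$ is the frame operator of some $\cG_0\in\cD_{(t,\eps)}(\cF)$, giving item 1 here. Item 2 of Theorem \ref{teo para ops} translates identically to item 2 here, again via \eqref{la =}.

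The bulk of the work is item 3. The implication $(\Leftarrow)$ is immediate: if $\cG = \{f_i^\# + k_i\}$ with $T_\cF^* T_\cK = 0$ and the claimed diagonalizations, then $T_\cF^* T_\cG = I$ shows $\cG$ is a dual; moreover $S_\cG = S_{\cF^\#} + S_\cK = \sum_{j}\rho_j\, v_j\otimes v_j$, and hence $\la(S_\cG) = \rho\da$.

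For $(\Rightarrow)$, start from $\cG \in \cD_{(t,\eps)}(\cF)$ with $\la(S_\cG) = \rho\da$. The computation in the proof of Proposition \ref{carac de ops frame duals} gives the decomposition $T_\cG = T_{\cF^\#} + A$ with $A^*T_\cF = 0$; putting $k_i = A^* e_i$ we get $\cG = \{f_i^\# + k_i\}$, $T_\cK = A$, $T_\cF^* T_\cK = 0$, and $S_\cG = S_{\cF^\#} + S_\cK$ with $B := S_\cK \in \matpos$. Then apply item 3 of Theorem \ref{teo para ops} to $S_{\cF^\#} + B$ to obtain an o.n.b.\ $\{v_j\}$ with $S_{\cF^\#} = \sum_j \la_j\, v_j\otimes v_j$ and $B = \sum_j \la_{d-j+1}(B)\, v_j\otimes v_j$.

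The main obstacle — and the only non-mechanical step — is to identify $\la(B)\ua = \rho - \la$, so that the coefficients $\la_{d-j+1}(B)$ appearing in the diagonalization of $B$ actually coincide with $(\rho-\la)_j$. For this I argue as follows: Lidskii's additive inequality yields $\la + \la(B)\ua \prec \la(S_\cG) = \rho\da$, which forces $\tr B = t - t_0$; hence $\la + \la(B)\ua \in \Lambda_{(t,\eps^2)}(\la, m)$, so Theorem \ref{cor mayo min mayo 2} gives $\rho \prec_w \la + \la(B)\ua$, which together with the equality of traces upgrades to $\rho \prec \la + \la(B)\ua$. Combined with the reverse $\la + \la(B)\ua \prec \rho$, the vector $\la + \la(B)\ua$ is a $\prec_w$-minimizer in $\Lambda_{(t,\eps^2)}(\la,m)$; since $\la(B)\ua$ is by definition ascending, the uniqueness clause in Theorem \ref{cor mayo min mayo 2} forces $\la + \la(B)\ua = \rho$, i.e.\ $\la(B)\ua = \rho - \la$ as required. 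The commutation $S_{\cF^\#}S_\cG = S_\cG S_{\cF^\#}$ is then immediate from the common diagonalization.
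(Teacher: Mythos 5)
Your proof is correct and follows exactly the approach the paper's own (one--line) proof gestures at: combine the identification $\cS(\cD_{(t,\eps)}(\cF))=U_{(t,\eps^2)}(S_{\cF^\#},m)$ from Eq.~\eqref{la =} with Theorem~\ref{teo para ops} and the decomposition $T_\cG=T_{\cF^\#}+A$ from Proposition~\ref{carac de ops frame duals}. You have correctly identified and closed the one gap the paper's one-liner leaves tacit, namely that item~3 of Theorem~\ref{teo para ops} only produces $B=\sum_j\lambda_{d-j+1}(B)\,v_j\otimes v_j$ and does not by itself assert $\lambda(B)\ua=\rho-\la$; recovering this via the uniqueness clause of Theorem~\ref{cor mayo min mayo 2} (after noting that $\la+\la(B)\ua$ is an $\prec_w$-minimizer with ascending $\la(B)\ua$) is exactly the right move.
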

\begin{proof}
It is a direct consequence of Eq. \eqref{la =},  Theorem \ref{teo para ops} and the characterization of dual frames given in 
Proposition \ref{carac de ops frame duals}. 
\end{proof}

\pausa
Notice that Theorem \ref{duals opt} contains a procedure to compute optimal duals $\G=\{g_i\}_{i\in\In}\in \cD_{(t \coma \eps)}(\cF)$. In what follows, given $h\in \convf$ we consider its associated convex potential $P_h$ on finite sequences in $\C^d$ given by $P_h(\cF)=\tr(h(S_\cF))=\sum_{i=1}^dh(\la_i(S_\cF))$.

\begin{cor}\label{cor energy}
Fix an increasing function $h\in \convf$. With the notations and terminology of Theorem \ref{duals opt}, 
the following inequality holds: 
\beq \label{lowb}
P_h(\cG)\geq \sum_{i\in\IN{d}} h(\rho_i) \peso{for every} \cG\in \cD_{(t \coma \eps)}(\cF) \ , 
\eeq and this lower bound is attained.
If we assume further that $h\in \convfs$,  then $\cG\in \cD_{(t \coma \eps)}(\cF)$ attains the lower bound in
\eqref{lowb}  $\iff \cG=\{f_i^\#+k_i\}_{i\in\In}$ for some $\cK=\{k_i\}_{i\in\In}$ such that 
$T_\cF^*T_\cK=0$ and such there exists an o.n.b. $\{v_j\}_{j\in\IN{d}}$ for which Eq. \eqref{eq sk} holds.
\end{cor}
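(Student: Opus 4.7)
The plan is to reduce this Corollary directly to Theorem \ref{duals opt} together with the general principle that submajorization $\prec_w$ controls convex-increasing trace potentials (and with strict convexity controls the case of equality).

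First I would invoke Theorem \ref{duals opt} item 1 to produce $\cG_0\in \cD_{(t\coma\eps)}(\cF)$ with $\la(S_{\cG_0})=\rho\da$, so that $P_h(\cG_0)=\sum_{i\in\IN{d}} h(\rho_i)$; this shows the bound in \eqref{lowb} is attained. Next, for an arbitrary $\cG\in \cD_{(t\coma\eps)}(\cF)$, Theorem \ref{duals opt} item 2 gives $\rho\da =\la(S_{\cG_0})\prec_w \la(S_{\cG})$. Applying fact 2 from the discussion of submajorization in Section \ref{subsec prelims mayo} (tracial inequality for $\prec_w$ with a convex and increasing function), we obtain
$$
\sum_{i\in\IN{d}} h(\rho_i)=\tr\,h(\la(S_{\cG_0}))\le \tr\,h(\la(S_{\cG}))=P_h(\cG)\ ,
$$
which is the desired inequality \eqref{lowb}.

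For the second claim, assume $h\in\convfs$ (and still increasing) and that some $\cG\in\cD_{(t\coma\eps)}(\cF)$ attains the lower bound. Then $\rho\da\prec_w \la(S_\cG)$ and $\tr\,h(\rho\da)=\tr\,h(\la(S_\cG))$. By fact 3 in Section \ref{subsec prelims mayo}, there exists a permutation $\sigma$ of $\IN{d}$ with $\la(S_\cG)_i=\rho\da_{\sigma(i)}$ for every $i\in\IN{d}$. Since both $\la(S_\cG)$ and $\rho\da$ are arranged in non-increasing order, they must coincide: $\la(S_\cG)=\rho\da$. The converse direction (that any such $\cG$ attains the lower bound) follows from $P_h(\cG)=\tr\,h(\la(S_\cG))=\tr\,h(\rho\da)$.

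Finally, once we know $\la(S_\cG)=\rho\da$, the characterization of $\cG$ claimed in the statement (namely $\cG=\{f_i^\#+k_i\}_{i\in\In}$ with $T_\cF^*T_\cK=0$ and the existence of an o.n.b.\ diagonalizing both $S_{\cF^\#}$ and $S_\cK$ as in \eqref{eq sk}) follows directly from Theorem \ref{duals opt} item 3. No other obstacle arises, since all the substantive work (the spectral minimization, the commutation property, and the decomposition of the dual $\cG$ as a perturbation of $\cF^\#$ by a sequence in the kernel of $T_\cF^*$) has already been carried out in Proposition \ref{carac de ops frame duals} and Theorem \ref{duals opt}. The only subtlety is to verify the reasonable hypothesis that the equality case of $\prec_w$ for strictly convex $h$ forces coincidence of vectors once both are ordered, which is the elementary observation used above.
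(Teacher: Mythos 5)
Your proof is correct and matches the paper's approach, which simply cites Theorem~\ref{duals opt} together with the submajorization-to-trace-inequality facts recalled in Section~\ref{subsec prelims mayo}; you have merely spelled out the (easy) details that the paper leaves implicit. The only point to keep in mind — which you do handle — is that the corollary's standing hypothesis keeps $h$ increasing throughout, so fact~2 applies for the inequality and fact~3 (strict convexity) applies for the equality case, and the permutation conclusion collapses to an equality because both $\rho^\downarrow$ and $\lambda(S_\cG)$ are already non-increasingly arranged.
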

\begin{proof}
It follows from Theorems \ref{duals opt} and the standard results of Section \ref{subsec prelims mayo}. 
\end{proof}

\begin{rem}
There are some $h\in\convf$ for which their associated convex potential $P_h$ can be computed in a rather direct way (i.e., without necessarily computing the eigenvalues of the frame operator of the sequence of vector). For example, if $h(x)=x^2$ then $P_h=\FP$ is the so-called frame potential. In this case, given a sequence $\cG=\{g_i\}_{i\in\In}$ then it is well known that $$P_h(\cG)=\FP(\cF)=\sum_{i,\,j\in\In}|\langle g_i\coma g_j\rangle |^2\ .$$
Now, consider a fixed frame $\F=\{f_j\}_{j\in\In}$ and assume that $\la(S_\cF)$ is a known data. 
Set $m=2d-n$, let $\eps>0$ and $t> \tr(S_{\F^\#})$ be such that $t-\tr(S_{\F^\#})\leq \min\{(d-m),\,d\}\cdot \eps^2$.
In this case $\la(S_{\cF^\#})$ is also a known data and therefore $\rho\in\R^d$ as in 
Corollary \ref{cor energy} can be explicitly computed (see Remark \ref{se calcula efectivo}). Thus, according to Corollary \ref{cor energy}  above we get 
\beq\label{ineq part}
\FP(\cG)=\sum_{i\coma j\in\In}|\langle g_i\coma g_j\rangle|^2\geq \sum_{i\in\IN{d}}\rho_i^2 \ \coma \ \text{for every}\ \cG\in  
\cD_{(t \coma \eps)}(\cF)\ .
\eeq
The previous inequality provides a quantitative criteria for checking the optimality of $\cG$. That is, the closer 
$\FP(\cG)$ is to this explicit lower bound, the more concentrated $\la(S_\cG)$ is (which is the type of analysis that originally motivated the introduction of the frame potential). Indeed, since $h(x)=x^2$ is strictly convex, Corollary \ref{cor energy} and Theorem \ref{duals opt} imply that if $\cG\in  \cD_{(t \coma \eps)}(\cF)$ attains the lower bound in Eq. \eqref{ineq part} then $\la(S_{\cG})$ has minimal spread (in the sense that is a $\prec_w$-minimizer) among $\la(S_{\cG'})$ for $\cG'\in \cD_{(t \coma \eps)}(\cF)$. Moreover, in this case the geometrical structure of $S_\cG$ can be described explicitly in terms of the geometrical structure of $S_\cF$.
\EOE
\end{rem}

\section{Optimal perturbations by equivalent frames}\label{sec 5 opt cohe pert}

In this section, given a frame $\cF$, we consider a matrix model for the design of perturbations of the identity $V$ such that $V\cdot \cF$ has the desired optimality properties. In Section \ref{model equiv}, using tools from matrix analysis, we show that there are optimal solutions within our abstract model. Along the way, we will prove some results that are interesting in their own right, and develop some aspects of the multiplicative Lidskii's inequality with respect to log-majorization (see Section \ref{sec append}). Then, in Section \ref{aplic equiv} we apply these results to the initial frame design problem.

\subsection{ A matrix model for $\cS(\cP_{(s \coma \delta)}(\cF))$}\label{model equiv}

In order tackle the problem of the existence and computation of optimal perturbations by equivalent frames of a fixed frame
we introduce the following matrix model: given $S\in \matpos$, $0<\delta<1$ and $s\in[(1-\delta)^d,(1+\delta)^d]$ then consider
\beq \label{defi cos}
\cO_{(s \coma \delta)}(S)=\{V^*S V:\ V\in\cG l(\hil)\ , \ \|V^*V-I\|\leq \delta \ , \ \det(V^*V)\geq s\ \}\ . 
\eeq
With the notation of Section \ref{sec opt cohe pert} and Eqs.  \eqref{defi cp}, \eqref{defi scp}, 
since $S_{V\cdot\cF}= VS_\cF V^*$ then Eq. \eqref{ident scp1} becomes $\cS(\cP_{(s \coma \delta)}(\cF))=\cO_{(s \coma \delta)}(S_\cF)$.

\pausa
The following result, that is the multiplicative analogue of Lidskii's (additive) inequality, will play a key role in our work on frames. We develop its proof in an Appendix (see Section \ref{sec append}).

\begin{teo}\label{teo hay max y min mayo}\rm
Let $S\in\matinvd^+$ and let $\ga\in (\R_{>0}^d)\da$. 
Then, for every $V\in\matinvd$ such that $\lambda(V^*V)=\ga$ we have that 
\beq\label{max y min logamyo}
\lambda(S)\circ \ga\ua\prec_{\log}\la(V^*SV)\prec_{\log}\lambda(S)\circ \ga
\in (\R_{>0}^d)\da \ .
\eeq 
Moreover, if $\lambda(V^*SV)=(\lambda(S)\circ \ga\ua)\da$ 
(resp. 
$\lambda(V^*SV)=\lambda(S)\circ \ga$) then there exists an o.n.b. 
$\{ v_i\}_{i\in \IN{d}}$ of $\C^d$ such that 
\beq\label{hay base}
S=\sum_{i\in \IN{d}} \la_i(S)\ v_i\otimes v_i \peso{and} |V^*|=
\sum_{i\in \IN{d}} \ga_{d+1-i}\rai\ v_i\otimes v_i\  
\eeq 
\big(\, resp. $S=\sum_{i\in \IN{d}} \lambda_i(S)\ v_i\otimes v_i$ and 
$|V^*|=\sum_{i\in \IN{d}} \ga_i\rai 
\ v_i\otimes v_i$\,\big).
\qed
\end{teo}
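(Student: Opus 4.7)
The plan is to first reduce the statement to a log-majorization inequality for products of two positive definite matrices, then invoke the multiplicative Lidskii inequality and its equality characterization (which is precisely what Section \ref{sec append} develops), and finally translate the simultaneous-diagonalization consequence back into a statement about $S$ and $|V^*|$.

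First, since $AB$ and $BA$ have the same nonzero eigenvalues for any square matrices, the map $M\mapsto \lambda(V^*MV)$ can be replaced by $M\mapsto \lambda(VV^*\cdot M)=\lambda(|V^*|^2\, M)$. Writing $T=|V^*|^2\in\matinvd^+$ and noting that $\lambda(T)=\lambda(V^*V)=\ga\in(\R_{>0}^d)\da$, the theorem reduces to the two-sided log-majorization
\beq\label{plan reduccion}
\lambda(S)\circ\ga\ua\prec_{\log}\lambda(S\,T)\prec_{\log}\lambda(S)\circ\ga
\eeq
for arbitrary $S,T\in\matinvd^+$ with $\lambda(T)=\ga$, together with the corresponding equality characterizations. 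I would then invoke the multiplicative Lidskii inequality of Li--Mathias, established (together with the case of equality) in Section \ref{sec append}: the right-hand inequality in \eqref{plan reduccion} comes from Horn's product inequality $\prod_{i=1}^k\lambda_i(ST)\le\prod_{i=1}^k\lambda_i(S)\lambda_i(T)$ for $k\in\IN{d-1}$, with equality at $k=d$ granted by $\det(ST)=\det(S)\det(T)$; the left-hand inequality is the more subtle Lidskii-type bound, which can be derived by applying the Horn inequality to the inverses $S^{-1}$ and $T^{-1}$ and using $\det(ST)=\det(S)\det(T)$ to lift the information on partial products from the bottom of the spectrum to partial products at the top.

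Next, for the equality case, suppose $\la(V^*SV)=(\la(S)\circ\ga)\da$. Back in the reduced picture \eqref{plan reduccion}, this means equality throughout the upper log-majorization chain $\la(ST)\prec_{\log}\la(S)\circ\ga$. The equality case of Section \ref{sec append} then produces an orthonormal basis $\{v_i\}_{i\in\IN{d}}$ of $\C^d$ in which both $S$ and $T$ are diagonal with matching orderings of their eigenvalues, i.e.
\[
S=\sum_{i\in\IN{d}}\la_i(S)\,v_i\otimes v_i \py T=\sum_{i\in\IN{d}}\ga_i\,v_i\otimes v_i\,.
\]
Since $|V^*|=T\rai$, taking square roots entry-wise in the same basis yields exactly \eqref{hay base}. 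The case $\la(V^*SV)=(\la(S)\circ\ga\ua)\da$ is handled identically, after replacing $\ga$ by $\ga\ua$ in the alignment; the eigenvectors then pair the top eigenvalues of $S$ with the \emph{smallest} eigenvalues of $T$, which is precisely the form $|V^*|=\sum_i\ga_{d+1-i}\rai\,v_i\otimes v_i$.

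The main obstacle is the equality characterization rather than the inequality itself: the inequality in \eqref{plan reduccion} is classical, but the passage from coincidence of the full spectra $\la(ST)=(\la(S)\circ\ga)\da$ to simultaneous diagonalization of $S$ and $T$ with compatible orderings is delicate. It requires analyzing equality in Horn's product inequality for every $k\in\IN{d-1}$, which forces the top-$k$ eigenspaces of $ST$, $S$, and $T$ to align in a rigid way; this is handled by compound matrices together with an inductive reduction on the dimension, and is exactly the content developed in Section \ref{sec append}.
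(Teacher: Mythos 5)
Your reduction via cyclicity --- replacing $\lambda(V^*SV)$ by $\lambda(|V^*|^2\,S)$ --- is a clean reformulation (the paper uses a kindred trick, $\lambda(VSV)=\lambda(S^{1/2}V^2S^{1/2})$), but the two halves of your argument each have a genuine gap.

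First, the left-hand inequality cannot be obtained by applying Horn's product inequality to $S^{-1}$ and $T^{-1}$ and then clearing via $\det(ST)=\det(S)\det(T)$. Horn for the inverses gives $\prod_{j=d-k+1}^d\lambda_j(ST)\ge\prod_{j=d-k+1}^d\lambda_j(S)\lambda_j(T)$, and dividing into $\det(ST)=\det(S)\det(T)$ just returns the Horn upper bound $\prod_{i=1}^{d-k}\lambda_i(ST)\le\prod_{i=1}^{d-k}\lambda_i(S)\lambda_i(T)$; you obtain nothing new. The left bound $\lambda(S)\circ\ga\ua\prec_{\log}\lambda(ST)$ is a genuine Lidskii-type statement and requires the full Li--Mathias inequality (Theorem \ref{LiMat}), whose essential feature is that it holds for \emph{arbitrary} index sets $J\subset\IN{d}$ of cardinality $k$, not just top or bottom blocks. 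The paper derives the lower bound precisely by applying Theorem \ref{LiMat} with $S^{1/2}V^2 S^{1/2}$ in the role of the positive matrix and $S^{-1/2}$ in the role of the conjugating operator, using that freedom in $J$.

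Second, in the equality case you assert that the appendix "produces an o.n.b. in which both $S$ and $T$ are diagonal with matching orderings of their eigenvalues." It does not: Theorem \ref{teo carac Li-Mat} only establishes that $S$ and $|V^*|$ \emph{commute}, i.e.\ are simultaneously diagonalizable, with no control over which eigenvector of $S$ pairs with which eigenvalue of $|V^*|$. That alignment is a separate, nontrivial step. In the paper, after commutativity one writes $S=\sum_i\la_i(S)\,w_i\otimes w_i$ and $V=\sum_i\la\ua_{\sigma(i)}(V)\,w_i\otimes w_i$ for some unknown permutation $\sigma$, takes logarithms (after translating so that everything is strictly positive), and invokes the \emph{additive} Lidskii equality characterization of \cite[Prop.\ 8.6, Rem.\ 8.7]{MRS12} to force $\sigma$ to the correct monotone matching. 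Your proposal omits this step entirely, and without it the conclusion \eqref{hay base} does not follow from commutativity alone.
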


\pausa
The previous result allows to show the existence of optimal (minimal spread) elements in $\cO_{(s \coma \delta)}(S)$ (see Eq. \eqref{defi cos}); we further compute their  geometric structure.
\begin{teo}\label{teo perturb multi}
Let $S\in\matinvd^+$, $0<\delta<1$ and let $s \in \big[\,(1-\delta)^d\coma (1+\delta)^d\, \big]$. 
Define the following data: $\la=\log \, \la(S) = \big(\,\log \,\lambda_i(S)\,\big)_{i\in\IN{d}}\in(\R^d)\da\ $,  
$$t=\log\left( \frac{s\cdot \det(S)}{(1-\delta)^d}\right)\geq \tr(\la) \peso{and} \eps
=\log\left( \frac{1+\delta}{1-\delta}\right)>0\ .
$$ 
Let $\rho=\rho_{(t \coma \eps)}(\la  \coma 0)\in\Lambda_{(t \coma \eps)}(\la  \coma 0)$ be as in Theorem \ref{mayo min mayo 2}. 
Denote by $\mu=\mu_{(s\coma \delta)}(S)$ the vector
\beq \label{el muel}
\mu = (1-\delta)\cdot \exp \, \rho = \big(\, (1-\delta) \, e^{\,\rho_i} 
\,\big)_{i\in\IN{d}}\in(\R_{>0}^d)\da\ .
\eeq 
Then, 
\ben
\item There exists $\tilde S_0\in \cO_{(s\coma \delta)}(S)$ such that $\la(\tilde S_0)=\mu$.
\item For every $\tilde S\in \cO_{(s\coma \delta)}(S)$
we have that 
\beq \label{desi teo}
\prod_{i=1}^k \mu_i 
\leq \prod_{i=1}^k\lambda_i(\tilde S)  \peso{for every} k\in \IN{d}  \  .
\eeq 
\item 
Given $\tilde S=V^*SV\in \cO_{(s\coma \delta)}(S)$
then equality holds in Eq. \eqref{desi teo} for every $k\in \IN{d}$ (i.e. $\mu=\la(\tilde S)$) $\iff$ 
$\det(V^*V)=s$ and there exists an o.n.b. $\{v_i\}_{i\in \IN{d}}$ for $\C^d$ such that 
\beq\label{hay base 2}
S=\sum_{i\in\IN{d}}\lambda_i(S)\ v_i\otimes v_i \peso{and} 
VV^* 
=\sum_{i\in\IN{d}} \frac{\mu_i}{\la_i(S)}\ v_i\otimes v_i\ . 
\eeq
\een
\end{teo}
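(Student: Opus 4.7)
The plan is to reduce the multiplicative problem over $\cO_{(s,\delta)}(S)$ to the additive submajorization problem in $\Lambda_{(t,\eps)}(\la,0)$ already solved in Theorem \ref{mayo min mayo 2}, using the multiplicative Lidskii inequality (Theorem \ref{teo hay max y min mayo}) as the bridge. The choice of parameters $t$ and $\eps$ is exactly what the logarithm requires: the constraint $\la_i(V^*V)\in[1-\delta,1+\delta]$ becomes $\log\la_i(V^*V)-\log(1-\delta)\in[0,\eps]$, and the constraint $\det(V^*V)\ge s$ becomes $\log\det(V^*V)-d\log(1-\delta)\ge\log(s/(1-\delta)^d)=t-\tr(\la)$.

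\medskip\noindent For item 2, fix $\tilde S=V^*SV\in\cO_{(s,\delta)}(S)$ and let $\ga=\la(V^*V)$. Set $\rho':=\la+\log\ga\ua-\log(1-\delta)\cdot\uno$; the previous observations show $\rho'\in\Lambda_{(t,\eps)}(\la,0)$, so Theorem \ref{mayo min mayo 2} gives $\rho\prec_w\rho'$. Combining with the lower log-majorization bound $\la(S)\circ\ga\ua\prec_{\log}\la(\tilde S)$ from Theorem \ref{teo hay max y min mayo} yields, for every $k\in\IN{d}$,
\begin{align*}
\sum_{i=1}^k\log\mu_i&=\sum_{i=1}^k\rho_i+k\log(1-\delta)\le\sum_{i=1}^k(\rho')\da_i+k\log(1-\delta)\\
&=\sum_{i=1}^k(\la+\log\ga\ua)\da_i\le\sum_{i=1}^k\log\la_i(\tilde S),
\end{align*}
which exponentiates to Eq. \eqref{desi teo}.

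\medskip\noindent For item 1, set $\tilde\nu:=\rho-\la$; by Theorem \ref{mayo min mayo 2} this is non-negative, bounded entrywise by $\eps$, and satisfies $\tilde\nu=\tilde\nu\ua$. Fix an o.n.b. $\{v_i\}_{i\in\IN{d}}$ diagonalizing $S$, define $\ga\ua_i:=(1-\delta)e^{\tilde\nu_i}\in[1-\delta,1+\delta]$, and let $V:=\sum_i\sqrt{\ga_{d+1-i}}\,v_i\otimes v_i\in\matinvd^+$. Then $\la(V^*V)=\ga$, hence $\|V^*V-I\|\le\delta$ and $\det(V^*V)=(1-\delta)^d e^{\tr\tilde\nu}=(1-\delta)^d e^{t-\tr\la}=s$. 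Since $S$ and $V$ commute, $V^*SV$ is diagonal in $\{v_i\}$ with eigenvalues $\la_i(S)\ga\ua_i=(1-\delta)e^{\rho_i}=\mu_i$, so $\la(V^*SV)=\mu$ (already in decreasing order since $\rho=\rho\da$).

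\medskip\noindent For item 3, assume equality in Eq. \eqref{desi teo} for all $k\in\IN{d}$. The case $k=d$ gives $\det(V^*V)=s$, and equality for all $k$ forces each inequality in the displayed chain to be an equality. Equality in Theorem \ref{teo hay max y min mayo} produces an o.n.b. $\{v_i\}$ with $S=\sum\la_i(S)v_i\otimes v_i$ and $|V^*|=\sum\sqrt{\ga_{d+1-i}}\,v_i\otimes v_i$. Also, $\rho\prec_w\rho'$ with equal partial sums gives $\rho\da=(\rho')\da$; since $\rho'-\la=\log\ga\ua-\log(1-\delta)\cdot\uno$ is automatically increasing by construction, the uniqueness clause in Theorem \ref{mayo min mayo 2} forces $\rho'=\rho$ entrywise. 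Thus $\ga\ua_i=(1-\delta)e^{\rho_i-\la_i}=\mu_i/\la_i(S)$, and $VV^*=|V^*|^2=\sum_i(\mu_i/\la_i(S))v_i\otimes v_i$. The converse direction is a direct verification. The main obstacle is showing $\rho'=\rho$ as vectors (not merely as sorted vectors), which requires combining the submajorization equality with the intrinsic ordering $\rho'-\la=(\rho'-\la)\ua$ via the uniqueness in Theorem \ref{mayo min mayo 2}.
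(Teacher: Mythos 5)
Your proof follows the same route as the paper: take logarithms to convert the multiplicative constraints into the additive set $\Lambda_{(t,\eps)}(\la,0)$, bridge via the multiplicative Lidskii inequality (Theorem \ref{teo hay max y min mayo}), and invoke the $\prec_w$-minimizer $\rho$ from Theorem \ref{mayo min mayo 2}. Items 1 and 2 are essentially verbatim the paper's argument, modulo the fact that you apply Theorem \ref{teo hay max y min mayo} to arbitrary $V\in\matinvd$ directly (which its statement permits) rather than restricting to $V\in\matinvd^+$ and reducing afterwards, as the paper does.

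Where you go beyond the paper is item 3, and this is worth noting. The paper concludes Eq.\ \eqref{hay base 2} directly from the equality case of Theorem \ref{teo hay max y min mayo}, but that theorem only delivers Eq.\ \eqref{hay base}, i.e.\ an o.n.b.\ diagonalizing $S$ and $|V^*|$ simultaneously with $|V^*| = \sum_i \ga_{d+1-i}\rai\, v_i\otimes v_i$. Passing from this to $VV^* = \sum_i (\mu_i/\la_i(S))\, v_i\otimes v_i$ requires the \emph{unsorted} equality $\la(S)\circ\ga\ua = \mu$, not merely $(\la(S)\circ\ga\ua)\da = \mu$. You correctly identify this as the main obstacle and resolve it by observing that equality in the chain also forces $\rho = (\rho')\da$, that $\rho'-\la = \log\ga\ua - \log(1-\delta)\cdot\uno$ is increasing by construction and lies in $\Lambda_{(t,\eps)}(\la,0)$ (once $\det(V^*V)=s$ is secured from the $k=d$ case), and that the uniqueness clause of Theorem \ref{mayo min mayo 2} then pins down $\rho'=\rho$ entrywise. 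This is a genuinely tighter write-up of the equality case than what the paper offers; the paper's proof implicitly relies on this but does not spell it out.
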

\begin{proof}
First notice that $$t-\tr(\la)=\log\left(\frac{s}{(1-\delta)^d}\right)\leq d\cdot \log\left( \frac{1+\delta}{1-\delta}\right)=d\cdot \eps\ .$$ Hence, we can compute $\rho_{(t \coma \eps)}(\la  \coma 0)$ as in Theorem \ref{mayo min mayo 2}. 

\pausa
Let $\{v_i\}_{i\in\IN{d}}$ be an o.n.b. for $\C^d$ such that $Sv_i=\lambda_i(S)\,v_i$ for $i\in\IN{d}\,$. Then we define 
$V=\sum_{i\in\IN{d}}(\frac{\mu_i}{\la_i(S)})^{1/2}\ v_i\otimes v_i\in\matinvd^+$ and $\tilde S_0=VSV$. It is clear that $\la(\tilde S_0)=\mu$. 
Since $\log \la(V^2)= \log(1-\delta)\cdot\uno_d+\rho-\la $, we get that 
$$\log(1-\delta)\leq \log \la_i(V^2) \leq \log(1-\delta)\, \eps=\log (1+\delta) \implies  1-\delta\leq \la_i(V^2)\leq 1+\delta \coma \quad i\in\IN{d}\ , $$ which is equivalent to $\|V^2-I\|\leq \delta$. Also, $\det(V^2)=(1-\delta)^d\, \det(S)^{-1}\, \exp(\tr(\rho))=s$ so that $\tilde S_0\in \cO_{(s\coma \delta)}(S)$ with $\la (\tilde S_0)=\mu$.

\pausa
In order to show items 2 and 3, let us first assume that $V\in \matinvd^+$. 
Then, by Theorem \ref{teo hay max y min mayo},  
$$\la(S)\circ \la(V^2)\ua\prec_{\log} \la(VSV) \py 
(\la(S)\circ \la(V^2)\ua)\da=\la(VSV)$$ 
if and only if there exists an o.n.b. 
such that Eq. \eqref{hay base} holds. 
Therefore
\beq\label{reduc lidskii}
\log(\la(S))+\log(\la(V^2))\ua=\log(\la(S)\circ \la(V^2)\ua) \prec \log(\la(VSV))\ .
\eeq 
Assume further that $\det(V^2)\geq s$ and $\|I-V^2\|\leq \delta$. Then $1-\delta\leq \la_i(V^2)\leq 1+\delta$ and hence
$\log(1-\delta)\leq \log(\la_i(V^2))\leq \log(1+\delta)$ for every  $i\in\IN{d}\,$.
On the other hand 
$$ 
\tr\,\log(\la(V^2))=\log(\det \, V^2)\geq \log(s) 
\ .
$$ 
Hence $\gamma=\log(\la(V^2))\ua- \log(1-\delta)\cdot \uno\in (\R_{\geq 0}^d)\da$ is such that $$\tr(\gamma)\geq \log\left(\frac{s}{(1-\delta)^d}\right) \peso{and} 0\leq \gamma_i\leq \log(1+\delta)-\log(1-\delta)=\log\left(\frac{1+\delta}{1-\delta}\right) \ , \ i\in\IN{d}\ .$$
Thus, using the notations in the statement we see that $$\log(\la(S))+\gamma=\la+\gamma\in\Lambda_{(t \coma \eps)}(\la  \coma 0) \implies \rho\prec \la+\gamma\ . $$
Therefore, in this case we have that 
$\rho+\log(1-\delta)\cdot \uno\prec_w \la+\log(\la(V^2))\ua$. These facts together with Eq. \eqref{reduc lidskii} imply that 
$$ \rho+\log(1-\delta)\cdot \uno\prec_w  \log(\la(S))+\log(\la(V^2))\ua\prec \log(\la(VSV))\ . $$ Since the exponential function is convex and increasing, the submajorization relation above 
implies Eq. \eqref{desi teo}. Moreover, if equality holds in Eq. \eqref{desi teo} for $k\in \IN{d}$ then, by the previous majorization relations, we conclude that $(1-\delta)\exp(\rho)=(\la(S)\circ \la(V^2)\ua)\da=\la(VSV)$. Hence, by Theorem \ref{teo hay max y min mayo}, we see that there exists an o.n.b. $\{v_i\}_{i\in\IN{d}}$ that satisfies Eq. \eqref{hay base 2}. Conversely, notice that if there exists an o.n.b. $\{v_i\}_{i\in\IN{d}}$ for which Eq. \eqref{hay base 2} holds then it is straightforward to show that equality holds in Eq. \eqref{desi teo} for $k\in \IN{d}\,$.

\pausa
Finally, if $V\in\matinvd$ is arbitrary, the result follows from the positive case using the reduction described at the end of the proof of Proposition \ref{ostrowski}.
\end{proof}

\subsection{Computation of optimal perturbations by equivalent frames}\label{aplic equiv}

We begin with the following result, which is a consequence of Theorem \ref{teo hay max y min mayo}, and the relations between submajorization and increasing convex functions described in Section \ref{subsec prelims mayo}.

\begin{teo}\label{teo hay max y min mayo frames}
Let $\cF=\{f_i\}_{i\in\In}$ be a frame for $\C^d$, with frame operator $S_\cF\in\matinvd^+$, and fix an increasing function $h\in\convf$. Given $\ga\in (\R_{>0}^d)\da$ then,
\ben
\item If $V\in\matinvd$ is such that $\lambda(V^*V)=\ga$ then we have that 
\beq\label{max y min logamyo frames}
\sum_{i\in\IN{d}}h(\la_i(S_\cF) \ \ga_{d+1-i})\leq P_h(V\cdot \cF) \ ,
\eeq and this lower bound is attained.
\item If we assume further that $h\in\convfs$ then equality holds in Eq. \eqref{max y min logamyo frames} iff 
there exists an o.n.b. $\{ v_i\}_{i\in \IN{d}}$ such that 
\beq\label{hay base frames}S_\cF=\sum_{i\in \IN{d}} \lambda_i(S_\cF)\ v_i\otimes v_i \peso{and} |V^*|=\sum_{i\in \IN{d}} \ga_{d+1-i}\ v_i\otimes v_i\  .\eeq 
\een
\end{teo}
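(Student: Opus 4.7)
The plan is to derive this theorem as a direct corollary of the multiplicative Lidskii's inequality (Theorem \ref{teo hay max y min mayo}) combined with the relation between submajorization and tracial inequalities for convex increasing functions described in Section \ref{subsec prelims mayo}.

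First I would rewrite the potential as $P_h(V\cdot \cF) = \tr h(S_{V\cdot \cF}) = \sum_i h(\lambda_i(S_{V\cdot \cF}))$, where $S_{V\cdot \cF} = V S_\cF V^*$. The key observation is that $V S_\cF V^* = (V^*)^* S_\cF (V^*)$, so Theorem \ref{teo hay max y min mayo} applied to the operator $V^*$ (whose modulus squared $V V^*$ shares the eigenvalue vector $\ga$ with $V^*V$) yields
$$
\la(S_\cF)\circ\ga\ua \prec_{\log} \la(S_{V\cdot \cF})\ .
$$
Since both vectors lie in $\R_{>0}^d$, the implication $\prec_{\log}\Rightarrow \prec_w$ recalled in Section \ref{subsec prelims mayo} gives $\la(S_\cF)\circ\ga\ua \prec_w \la(S_{V\cdot \cF})$, and applying item 2 of that same discussion to the convex and increasing function $h$ yields
$$
\sum_{i\in\IN{d}} h(\la_i(S_\cF)\,\ga_{d+1-i})=\sum_{i\in\IN{d}} h\big((\la(S_\cF)\circ\ga\ua)_i\big) \le \sum_{i\in\IN{d}} h(\la_i(S_{V\cdot \cF}))=P_h(V\cdot \cF)\ ,
$$
which is Eq. \eqref{max y min logamyo frames}. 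To see the lower bound is attained, I would exhibit an explicit $V$: choose an o.n.b. $\{v_i\}_{i\in\IN{d}}$ diagonalizing $S_\cF$ with eigenvalues in decreasing order, and let $V$ be the positive operator diagonalized by the same basis with eigenvalues $\sqrt{\ga_{d+1-i}}$. A direct computation (or the equality clause of Theorem \ref{teo hay max y min mayo}) shows $\la(V S_\cF V^*) = (\la(S_\cF)\circ \ga\ua)\da$, realizing equality.

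For part 2, suppose equality holds in Eq. \eqref{max y min logamyo frames} with $h\in\convfs$. The submajorization $\la(S_\cF)\circ\ga\ua\prec_w \la(S_{V\cdot \cF})$ together with equality of the traces $\tr h$ and strict convexity (item 3 of Section \ref{subsec prelims mayo}) forces the two vectors to be permutations of each other; since $\la(S_{V\cdot \cF})$ is already sorted in decreasing order, we get $\la(S_{V\cdot \cF}) = (\la(S_\cF)\circ\ga\ua)\da$. This is precisely the equality case of the lower log-majorization bound in Theorem \ref{teo hay max y min mayo} (applied to $V^*$), so that theorem provides an o.n.b. $\{v_i\}_{i\in\IN{d}}$ satisfying Eq. \eqref{hay base frames}. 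Conversely, for such an o.n.b.\ a direct computation shows the eigenvalues of $S_{V\cdot \cF}$ are exactly $(\la(S_\cF)\circ\ga\ua)\da$, so equality trivially holds.

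The only subtle point, and the one requiring care, is the passage in the equality analysis from \emph{equality of tracial sums} to \emph{equality of the decreasingly rearranged vectors}, because the vector $\la(S_\cF)\circ\ga\ua$ on the left is not itself written in decreasing order. The strict-convexity tool from item 3 of Section \ref{subsec prelims mayo} handles exactly this, so the main work is simply to assemble the ingredients in the right order; no new technical lemma is needed beyond Theorem \ref{teo hay max y min mayo}.
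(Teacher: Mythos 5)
Your argument follows the paper's proof essentially verbatim: reduce $P_h(V\cdot\cF)$ to the eigenvalue vector of $S_{V\cdot\cF}$, apply the multiplicative Lidskii bound of Theorem \ref{teo hay max y min mayo} to get $\lambda(S_\cF)\circ\ga\ua\prec_{\log}\lambda(S_{V\cdot\cF})$, pass to $\prec_w$, and then use the submajorization/convexity dictionary of Section \ref{subsec prelims mayo} for both the inequality and the strict--convexity equality analysis. You are in fact more careful than the paper at one point: the paper's proof opens by recalling $S_{V\cdot\cF}=V^*S_\cF V$, but with $V\cdot\cF=\{Vf_i\}$ one has $S_{V\cdot\cF}=V S_\cF V^*$, and applying Theorem \ref{teo hay max y min mayo} to $W=V^*$, as you do, is the correct move. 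One small caveat: the equality clause of that theorem applied to $W=V^*$ produces an o.n.b.\ in which $|W^*|=|V|=\sum_{i\in\IN{d}}\ga_{d+1-i}^{1/2}\,v_i\otimes v_i$, which is not literally the formula in Eq.\ \eqref{hay base frames} (that one names $|V^*|$ and omits the exponent $1/2$); the printed form of Eq.\ \eqref{hay base frames} seems to carry the same $V\leftrightarrow V^*$ slip as the paper's proof, so your last step should state the resulting $|V|$-formula explicitly rather than cite Eq.\ \eqref{hay base frames} verbatim.
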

\begin{proof}
Recall that if $V$ and $\cF$ are as above then $S_{V\cdot \cF}=V^*S_\cF\,V$. Hence, by Theorem \ref{teo hay max y min mayo} and the remarks in Section \ref{subsec prelims mayo} we conclude that 
$$
\lambda(S_\cF)\circ \lambda(V^*V)\ua\prec_{\log} \lambda(S_{V\cdot \cF}) \implies \lambda(S_\cF)\circ \lambda(V^*V)\ua\prec_w \lambda(S_{V\cdot \cF})\ .
$$ The submajorization above together with the fact that $h$ is an increasing and convex function imply Eq. \eqref{max y min logamyo frames}. On the other hand, it is clear that this lower bound is attained.
Assume now that the lower bound in Eq. \eqref{max y min logamyo frames} is attained for some $V$ as above.
Using the fact that $h$ is (an increasing) strictly convex function and the submajorization relation above then we conclude that $(\lambda(S_\cF)\circ \lambda(V^*V)\ua)\da =\lambda(S_{V\cdot \cF}) $ (see the remarks in Section \ref{subsec prelims mayo}).
Thus, 
again by Theorem \ref{teo hay max y min mayo}, we conclude that there exists  an o.n.b. $\{ v_i\}_{i\in \IN{d}}$ for $\C^d$ for which
Eq. \eqref{hay base frames} holds. 
\end{proof}

\begin{nota}\label{con delta}
Let $\cF=\{f_i\}_{i\in\In}$ be a frame for $\C^d$ with frame operator $S_\cF\in\matinvd^+$. Let $0<\delta<1$ and let $s>0$ be such that $(1-\delta)^d\leq s\leq (1+\delta)^d$. Then, with the notations from Eqs. \eqref{defi cp}, \eqref{defi scp}
and \eqref{defi cos} then, the identity in Eq. \ref{ident scp1} becomes
$$\cS(\cP_{(s \coma \delta)}(\cF))=\cO_{(s \coma \delta)}(S_\cF)\ .$$
Thus, the following result is a direct consequence of the previous identity and Theorem \ref{teo perturb multi}.
\end{nota}

\begin{teo}\label{teo perturb multi frames}\rm
Let $\cF=\{f_i\}_{i\in\In}$ be a frame for $\C^d$ with frame operator $S_\cF\in\matinvd^+$. Let 
$\delta $ and $s$ as in \ref{con delta}.  
Define the following data: $\la=\log \, \la(S_\cF) = \big(\,\log \,\lambda_i(S_\cF)\,\big)_{i\in\IN{d}}\in(\R^d)\da\ $,  
$$t=\log\left( \frac{s\cdot \det \, S_\cF}{(1-\delta)^d}\right)\geq \tr \,\la  \peso{and} \eps
=\log\left( \frac{1+\delta}{1-\delta}\right)>0\ .
$$ 
Let $\rho=\rho_{(t \coma \eps)}(\la  \coma 0)\in\Lambda_{(t \coma \eps)}(\la  \coma 0)$ be as in Theorem \ref{mayo min mayo 2}. 
Denote by $\mu=\mu_{(s\coma \delta)}(\cF)$ the vector
$$
\mu = (1-\delta)\cdot \exp \, \rho = \Big(\, (1-\delta) \, e^{\,\rho_i} 
\, \Big)_{i\in\IN{d}}\in(\R^d)\da\ .
$$ 
Then, 
\ben
\item There exists $V_0\cdot \cF\in \cP_{(s \coma \delta)}(\cF)$ such that $\la(S_{V_0\cdot \cF})=\mu$.
\item For every $V\cdot \cF\in \cP_{(s \coma \delta)}(\cF)$
we have that 
\beq \label{desi teo frame}
\prod_{i=1}^k \mu_i 
\leq \prod_{i=1}^k\lambda_i(S_{V\cdot \cF})  \peso{for every} k\in \IN{d}  \  .
\eeq 
\item 
Equality holds in Eq. \eqref{desi teo frame} for every $k\in \IN{d}$ 
(i.e. $\la(S_{V\cdot \cF})=\mu$)
$\iff$ 
$\det(V^*V)=s$ and there exists an o.n.b. $\{v_i\}_{i\in \IN{d}}$ for $\C^d$ such that 
\beq\label{hay base 2 frame}
S_{\cF}=\sum_{i\in\IN{d}}\lambda_i(S_{\cF})\ v_i\otimes v_i \peso{and} 
VV^* 
=\sum_{i\in\IN{d}} \frac{\mu_i}{\la_i(S)}\ v_i\otimes v_i\ . 
\eeq
\een\qed
\end{teo}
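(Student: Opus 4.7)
The plan is to exploit the identification $\cS(\cP_{(s \coma \delta)}(\cF)) = \cO_{(s \coma \delta)}(S_\cF)$ noted in Notations \ref{con delta}, which reduces the entire statement to the matrix-level Theorem \ref{teo perturb multi} applied to the positive invertible operator $S = S_\cF$. The definitions of $\la$, $t$, $\eps$ and the minimizer $\rho = \rho_{(t \coma \eps)}(\la \coma 0)$ in the present statement are literally those of Theorem \ref{teo perturb multi}, so the vector $\mu$ produced here coincides with $\mu_{(s \coma \delta)}(S_\cF)$ of that theorem, and no recomputation is needed.

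For item 1, I would feed $S_\cF$ into Theorem \ref{teo perturb multi}(1) to obtain some $\tilde S_0 \in \cO_{(s \coma \delta)}(S_\cF)$ with $\la(\tilde S_0) = \mu$; writing $\tilde S_0 = V_0^* S_\cF V_0$ for a $V_0$ meeting the constraints and invoking the identification, the equivalent frame $V_0 \cdot \cF$ belongs to $\cP_{(s \coma \delta)}(\cF)$ and has $\la(S_{V_0 \cdot \cF}) = \mu$. For item 2, I would start from an arbitrary $V \cdot \cF \in \cP_{(s \coma \delta)}(\cF)$, observe that $S_{V \cdot \cF} = V S_\cF V^* \in \cO_{(s \coma \delta)}(S_\cF)$, and read off \eqref{desi teo frame} from Theorem \ref{teo perturb multi}(2). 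For item 3, the equality characterization (and the diagonalization \eqref{hay base 2 frame}) is a direct translation of Theorem \ref{teo perturb multi}(3) through the same identification.

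I do not expect any serious obstacle here: the genuine analytic content — the multiplicative Lidskii inequality and its case of equality (Theorem \ref{teo hay max y min mayo}), the construction of the log-majorization minimizer via Theorem \ref{mayo min mayo 2}, and the passage from matrices in $\cO_{(s \coma \delta)}(S_\cF)$ to frame operators — has already been carried out in the earlier sections. The only mild bookkeeping point is reconciling the expressions $V^* S_\cF V$ (appearing in the definition of $\cO_{(s \coma \delta)}(S_\cF)$) and $V S_\cF V^*$ (the frame operator of $V \cdot \cF$); but the constraint set $\{V : \|V^*V - I\| \leq \delta \coma \det(V^*V) \geq s\}$ is stable under $V \mapsto V^*$ (since $V^*V$ and $VV^*$ are unitarily equivalent for invertible $V$, so they share spectra and determinants), and the factor $VV^*$ in \eqref{hay base 2 frame} matches exactly the one in \eqref{hay base 2}. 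This makes the translation between the two statements formally automatic.
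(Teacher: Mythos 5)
Your reduction to Theorem \ref{teo perturb multi} via the identification $\cS(\cP_{(s \coma \delta)}(\cF)) = \cO_{(s\coma\delta)}(S_\cF)$ is exactly the route the paper follows (the theorem is stated without further proof after the comment in Notations \ref{con delta}); items 1 and 2 go through as you describe, and you correctly isolate the adjoint flip between $V S_\cF V^*$ (the frame operator of $V\cdot\cF$) and $V^* S_\cF V$ (as $\cO_{(s\coma\delta)}$ is written) as the one translation issue, and you rightly observe that the constraint set is stable under $V\mapsto V^*$.

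Where you move too fast is item 3: your claim that the factor $VV^*$ in \eqref{hay base 2 frame} ``matches exactly the one in \eqref{hay base 2}'' does not survive the very substitution you set up. Since $S_{V\cdot\cF}=V S_\cF V^*=(V^*)^* S_\cF\, V^*$, the operator to feed into Theorem \ref{teo perturb multi}(3) is $W=V^*$, and that theorem's conclusion \eqref{hay base 2} then diagonalizes $WW^*=V^*V$, not $VV^*$. Equivalently, writing $V=U|V|$ in polar form gives $\la(V S_\cF V^*)=\la(|V|\,S_\cF\,|V|)$, and the positive case of Theorem \ref{teo perturb multi} forces $|V|^2=V^*V$ to commute with $S_\cF$; nothing in the argument makes $VV^*$ commute with $S_\cF$, and for a non-normal $V$ it need not. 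So the correct translation of \eqref{hay base 2} into the frame statement reads $V^*V=\sum_{i\in\IN{d}}\tfrac{\mu_i}{\la_i(S_\cF)}\,v_i\otimes v_i$. (The paper's own Eq.~\eqref{hay base 2 frame}, which also still shows $\la_i(S)$ rather than $\la_i(S_\cF)$ in the denominator, appears to have been copied from \eqref{hay base 2} without adjusting for the adjoint flip; your ``matches exactly'' replicates that slip rather than catching it.) Once that line is corrected, your argument is complete and identical in spirit to the paper's.
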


\pausa
The previous result establishes the existence of perturbations $V\cdot\cF\in \cP_{(s \coma \delta)}(\cF)$ by equivalent frames, that are optimal in a rather structural sense. For example, these optimal perturbations are minimizers of convex potentials. In turn, these convex potential can be used to obtain a direct and simple (scalar) quantitative measure of performance of arbitrary perturbations within $\cP_{(s \coma \delta)}(\cF)$. We formalize these remarks in the following  

\begin{cor}\label{cor frames cercano a la identidad}
Fix an increasing function $h\in\convf$. With the notations and terminology of Theorem \ref{teo perturb multi frames}\,:
\ben
\item
For every $V\cdot \cF\in \cP_{(s \coma \delta)}(\cF)$ then
\beq \label{desi teo frames}
 P_h(V\cdot \cF)\geq  \sum_{i\in \IN{d}}h(\mu_i)\ ,
\eeq and this lower bound is attained.
\item 
Assume further that $h\in \convfs$. Then, $V\cdot \cF\in \cP_{(s \coma \delta)}(\cF)$  attains the lower bound in
\eqref{desi teo frames} iff 
$\det(V^*V)=s$ and there exists an o.n.b. $\{v_i\}_{i\in \IN{d}}$ for $\C^d$ 
such that Eq. \eqref{hay base 2 frame} holds.
\een
\end{cor}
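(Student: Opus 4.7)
The plan is to reduce the statement to Theorem~\ref{teo perturb multi frames} combined with the standard machinery relating log-majorization, sub-majorization, and convex traces recalled in Section~\ref{subsec prelims mayo}. Since $S_{V\cdot\cF}=V^*S_\cF V\in \cO_{(s\coma\delta)}(S_\cF)$, item 2 of Theorem~\ref{teo perturb multi frames} yields $\mu\prec_{\log}\la(S_{V\cdot\cF})$ in $\R^d_{>0}$. As noted in Section~\ref{subsec prelims mayo}, log-majorization implies sub-majorization, so $\mu\prec_w \la(S_{V\cdot\cF})$.

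For item 1, I would simply apply the fact that if $x\prec_w y$ in $\R^d$ and $h$ is an increasing convex function defined on an interval containing the entries of $x$ and $y$, then $\tr\, h(x)\leq \tr\, h(y)$. Applied to our sub-majorization this gives
\[
\sum_{i\in\IN{d}} h(\mu_i)\leq \sum_{i\in\IN{d}} h(\la_i(S_{V\cdot\cF}))=\tr\, h(S_{V\cdot\cF})=P_h(V\cdot\cF),
\]
which is precisely Eq.~\eqref{desi teo frames}. The lower bound is attained by the distinguished perturbation $V_0\cdot\cF$ provided by item 1 of Theorem~\ref{teo perturb multi frames}, since $\la(S_{V_0\cdot\cF})=\mu$ and therefore $P_h(V_0\cdot\cF)=\sum_{i\in\IN{d}} h(\mu_i)$.

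For item 2, assume further that $h\in \convfs$ and that $V\cdot\cF\in\cP_{(s\coma\delta)}(\cF)$ attains the bound, so $\tr\, h(\mu)=\tr\, h(\la(S_{V\cdot\cF}))$. I invoke the equality case recalled in Section~\ref{subsec prelims mayo}: for strictly convex $h$, the equality $\tr\, h(x)=\tr\, h(y)$ combined with $x\prec_w y$ forces $y$ to be a permutation of $x$. Since both $\mu$ and $\la(S_{V\cdot\cF})$ lie in $(\R^d_{>0})\da$, the permutation must be the identity, giving $\la(S_{V\cdot\cF})=\mu$. Thus equality holds in Eq.~\eqref{desi teo frame} for every $k\in\IN{d}$, and the characterization in item 3 of Theorem~\ref{teo perturb multi frames} delivers $\det(V^*V)=s$ and an o.n.b. $\{v_i\}_{i\in\IN{d}}$ satisfying Eq.~\eqref{hay base 2 frame}. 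The converse implication is immediate: if $\det(V^*V)=s$ and such a basis exists, then item 3 of Theorem~\ref{teo perturb multi frames} forces $\la(S_{V\cdot\cF})=\mu$, hence $P_h(V\cdot\cF)=\sum_{i\in\IN{d}} h(\mu_i)$.

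No genuine obstacle is anticipated here, since all the heavy lifting (the multiplicative Lidskii-type bound, the explicit construction of the minimizer, and the equality case for log-majorization) was carried out in Theorem~\ref{teo perturb multi frames}. The only subtle point is the transition from log-majorization to sub-majorization of the \emph{eigenvalues} (not of their logarithms), which is why item 1 requires $h$ itself to be increasing and convex rather than $h\circ\exp$; but this is exactly the hypothesis made on $h$.
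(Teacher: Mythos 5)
Your argument is correct and follows essentially the same route as the paper's proof: extract sub-majorization $\mu\prec_w\la(S_{V\cdot\cF})$ from the product inequalities of Theorem~\ref{teo perturb multi frames}, apply the tracial inequality for increasing convex $h$, and under strict convexity use the equality case from Section~\ref{subsec prelims mayo} to force $\mu=\la(S_{V\cdot\cF})$ and then invoke item~3 of that theorem. One small imprecision worth noting: item~2 of Theorem~\ref{teo perturb multi frames} does not literally give $\mu\prec_{\log}\la(S_{V\cdot\cF})$ as defined in Eq.~\eqref{mayolog}, since that relation requires \emph{equality} of the full products, which fails here when $\det(V^*V)>s$; what item~2 actually supplies is the one-sided chain of product inequalities for all $k\in\IN{d}$, and the passage to $\mu\prec_w\la(S_{V\cdot\cF})$ is obtained (as the paper does) by taking logarithms to get $\log\mu\prec_w\log\la(S_{V\cdot\cF})$ and composing with the increasing convex map $\exp$. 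The rest of your argument is unaffected by this.
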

\begin{proof}
With the notations above and using the fact that $S_{V\cdot \cF}=V^*S_\cF\,V$, then the remarks in Section \ref{subsec prelims mayo} and Theorem \ref{teo perturb multi} imply that 
$$ (\log \mu_i)_{i\in \IN{d}}\prec_w (\log (\la_i(S_{V\cdot \cF})))_{i\in\IN{d}} \implies \mu\prec_w \lambda(S_{V\cdot \cF})\ . $$
Then, the submajorization above together with the fact that $h$ is an increasing and convex function imply Eq. \eqref{desi teo frames}. Assume further that the lower bound in Eq. \eqref{desi teo frames} is attained for some $V$ as above.
Using the fact that $h$ is (an increasing) strictly convex function and the submajorization relation above then we conclude that $\mu=\lambda(S_{V\cdot \cF})$.
Hence,
by Theorem \ref{teo perturb multi}, we conclude that there exists  an o.n.b. $\{ v_i\}_{i\in \IN{d}}$ for $\C^d$ for which
Eq. \eqref{hay base 2 frame} holds. 
\end{proof}

\subsection{Expansive perturbations by equivalent frames}
\pausa
Arguing as in proof of Theorem \ref{teo perturb multi} above, in terms of Theorem \ref{teo hay max y min mayo} and
the results from \cite{MRS4} described in Remark \ref{resultados previos1}, 
we can get the following perturbation result which is of independent interest.
\begin{teo}\label{lids multi y acha}\rm
Let $S\in\matinvd^+$ and let $s>1$.
Define $$\la=(\log \, \lambda_i(S) )_{i\in\IN{d}}\in(\R^d)\da \peso{and}  t=\log \, s + \tr \, \la > \tr \, \la \ .$$ 
Let $\nu=\nu(\la\coma t)\in\Lambda_t(\la)$ be as in Remark \ref{resultados previos1}. 
Denote by $\mu = (e^{\nu_i})_{i\in\IN{d}}\,$. Then, 
\ben
\item There exists $V_0\in \matinvd$ expansive (i.e. $V_0^*V_0\geq I$) such that 
$$\det(V_0^*V_0)= s  \py \la(V_0^*S\,V_0)=\mu \ .
$$
\item If $V\in \matinvd$ is expansive and such that $\det(V^*V)= s$, then $\mu \prec_{\log} \la(V^*SV)$. 
\item Equalities hold in item 2
$\iff$ there exists an o.n.b. (of eigenvectors) 
$\{v_i\}_{i\in \IN{d}}$ for $\C^d$ such that $S$ and $VV^*$ satisfy Eq. \eqref{hay base 2}.  \QED
\een
\end{teo}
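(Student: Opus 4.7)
The plan is to mimic the strategy of Theorem \ref{teo perturb multi}, with the simplification that the expansive restriction only imposes a lower bound (eigenvalues $\geq 1$) instead of a two-sided bound on $\la(V^*V)$; thus, the relevant scalar model is $\Lambda_t(\la)$ from Remark \ref{resultados previos1} rather than $\Lambda_{(t\coma \eps)}(\la\coma 0)$.

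For item 1, I would diagonalize $S = \sum_{i\in \IN{d}} \la_i(S)\, w_i\otimes w_i$ in an o.n.b.\ $\{w_i\}\subset \C^d$, and define
$$
V_0 \igdef \sum_{i\in\IN{d}} \Big(\frac{\mu_i}{\la_i(S)}\Big)\rai w_i\otimes w_i \in \matinvd^+ \ .
$$
Since $\nu \in \Lambda_t(\la)$ satisfies $\nu\ge \la$ coordinatewise, we get $\mu_i/\la_i(S) = e^{\nu_i-\la_i}\ge 1$, so $V_0^*V_0 = V_0^2 \ge I$ (expansive). The identity $\det(V_0^*V_0)=e^{\tr\nu}/\det(S) = e^{t-\tr\la} = s$ follows from $t = \log s + \tr\la$, and $V_0^*SV_0=\sum e^{\nu_i}w_i\otimes w_i$ has eigenvalues $\mu$.

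For item 2, the key step is to apply the multiplicative Lidskii inequality \eqref{max y min logamyo} from Theorem \ref{teo hay max y min mayo}, which gives
$$
\la(S)\circ \la(V^*V)\ua \prec_{\log} \la(V^*SV)\ .
$$
Taking logarithms yields $\la + \log \la(V^*V)\ua \prec \log \la(V^*SV)$ (majorization, since both sides have trace $\log\det S+\log\det(V^*V)=\log\det S+\log s = \tr\la + \log s = t$). Since $V^*V\geq I$, the vector $\log \la(V^*V)\ua\in\R_{\geq 0}^d$, so $\gamma\igdef\la+\log\la(V^*V)\ua\in \Lambda_t(\la)$. By the $\prec_w$-minimality of $\nu$ in $\Lambda_t(\la)$ (Remark \ref{resultados previos1}) together with $\tr\gamma=\tr\nu=t$, we get $\nu\prec\gamma\prec \log\la(V^*SV)$. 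Exponentiating then yields $\mu \prec_{\log}\la(V^*SV)$, as desired.

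For item 3, the easy direction (the existence of the o.n.b.\ implying equality) is a direct computation of $V^*SV$ using the shared eigenbasis. For the converse, assuming $\mu=\la(V^*SV)$, the chain of majorizations collapses: $\nu \prec \gamma \prec \log\la(V^*SV)=\nu$, so both inequalities are equalities. Since also $\gamma-\la=\log\la(V^*V)\ua=(\gamma-\la)\ua$, the uniqueness statement at the end of Remark \ref{resultados previos1} forces $\gamma=\nu$, i.e.\ $\la(V^*V)\ua_i = \mu_i/\la_i(S)$ for each $i$. Feeding the equality $\la(V^*SV)=(\la(S)\circ \la(V^*V)\ua)\da$ back into the equality case of Theorem \ref{teo hay max y min mayo} supplies an o.n.b.\ $\{v_i\}_{i\in \IN{d}}$ diagonalizing $S$ and $|V^*|$ (hence $VV^*$) with the required eigenvalue assignment \eqref{hay base 2}.

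The main obstacle I anticipate is justifying cleanly that the multiplicative equality forces $\gamma = \nu$ (not just $\gamma\da = \nu$); this is where the special ``non-increasing difference'' shape of $\nu-\la$ in \eqref{sobre el orden1} combines with the increasing order of $\log\la(V^*V)\ua$ to pin down $\gamma$ uniquely via the characterization in Remark \ref{resultados previos1}. Once this identification is made, the vector $\la(V^*V)\ua$ coincides with $(\mu_i/\la_i(S))_{i\in\IN{d}}$, which (by the explicit block form of $\nu$) is already in increasing order, so the eigenbasis provided by Theorem \ref{teo hay max y min mayo} diagonalizes $VV^*$ with the stated eigenvalues.
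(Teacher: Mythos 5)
Your proof is correct and follows exactly the route the paper indicates (``arguing as in the proof of Theorem \ref{teo perturb multi}''): diagonalize $S$ for item~1, pass to logarithms and use the $\prec_w$-minimality of $\nu(\la\coma t)$ in $\Lambda_t(\la)$ together with the multiplicative Lidskii inequality for item~2, and combine the equality case of Theorem \ref{teo hay max y min mayo} with the uniqueness clause of Remark \ref{resultados previos1} (the ``$\nu-\la$ is increasing'' normalization) for item~3. The only simplification over Theorem \ref{teo perturb multi} is the one you note, that the expansive hypothesis removes the two-sided bound and the $\log(1-\delta)$-shift, so the scalar model is $\Lambda_t(\la)$ rather than $\Lambda_{(t\coma\eps)}(\la\coma 0)$; no new ideas are needed.
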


\def\expFs{\cP \cE_s(\cF)}

\pausa
We shall consider expansive perturbations of a fixed frame $\cF$ by equivalent frames, with a fixed determinant parameter: 
Let $\cF=\{f_i\}_{i\in\In}$ be a frame for $\C^d$ and let $s>1$. Denote by 
$$
\expFs = \big\{V\cdot \cF : V\in \matinvd \ , \ \ \det \, V^*V = s \py V^*V\ge I\big\} \ .
$$
\begin{cor}\label{teo perturb multi frames expansive}\rm
Let $\cF=\{f_i\}_{i\in\In}$ be a frame for $\C^d$ with frame operator $S_\cF\in\matinvd^+$. 
Let  $s>1$. 
Consider the data $\la \coma t\coma \nu$ and $ \mu$ as in Theorem \ref{lids multi y acha} with $S = S_\cF\,$. 
Then, 
\ben
\item There exists $V_0\cdot \cF\in \expFs$ such that $\la(S_{V_0\cdot \cF})=\mu$.
\item For every other $V\cdot \cF\in \expFs
$
we have that 
$\prodl_{i=1}^k \mu_i 
\leq \prodl_{i=1}^k\lambda_i(S_{V\cdot \cF})$ for $k\in \IN{d}\,$.
\item 
Equality holds in item 2 
for every $k\in \IN{d}$ 
(i.e. $\la(S_{V\cdot \cF})=\mu$)
$\iff$ 
there exists an o.n.b. $\{v_i\}_{i\in \IN{d}}$ for $\C^d$ such that 
$S_\cF$ and $VV^*$ satisfy Eq. \eqref{hay base 2 frame}. \QED
\een
\end{cor}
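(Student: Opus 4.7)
The plan is to derive this corollary as a direct translation of Theorem \ref{lids multi y acha} applied to the operator $S = S_\cF$, using the basic identity $S_{V\cdot \cF} = V^* S_\cF\, V$ for $V \in \matinvd$. Observe that by definition of $\expFs$, giving an element $V\cdot \cF \in \expFs$ is equivalent to giving an expansive $V \in \matinvd$ with $\det(V^*V) = s$, so the parametrization used in Theorem \ref{lids multi y acha} matches the one here.

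\textbf{Item 1.} I would apply Theorem \ref{lids multi y acha} item 1 to $S = S_\cF$, obtaining an expansive $V_0 \in \matinvd$ with $\det(V_0^*V_0) = s$ and $\la(V_0^*S_\cF V_0) = \mu$. Then $V_0\cdot \cF \in \expFs$ and $S_{V_0\cdot \cF} = V_0^*S_\cF V_0$ has the required eigenvalue list.

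\textbf{Item 2.} For any $V\cdot \cF \in \expFs$, I would apply Theorem \ref{lids multi y acha} item 2 to get $\mu \prec_{\log} \la(V^*S_\cF V) = \la(S_{V\cdot\cF})$. Unfolding the definition of log-majorization in \eqref{mayolog} directly yields the partial-product inequality claimed (with equality forced at $k=d$ since $\det V^*V = s$ fixes the full product $\prod_i \la_i(S_{V\cdot \cF}) = s\cdot \det S_\cF = e^t = \prod_i \mu_i$).

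\textbf{Item 3.} Assume equality holds for every $k \in \IN{d}$, i.e.\ $\la(S_{V\cdot \cF}) = \mu$. Since $\mu = \mu\da$ and $\la(V^*S_\cF V)$ is the decreasing rearrangement, this is precisely the equality case in Theorem \ref{lids multi y acha} item 3, which provides an o.n.b.\ $\{v_i\}_{i\in\IN{d}}$ such that $S_\cF$ and $VV^*$ satisfy Eq.\ \eqref{hay base 2}; this is exactly the content of Eq.\ \eqref{hay base 2 frame}. The converse direction is immediate: if such an o.n.b.\ exists, then $VV^*$ and $S_\cF$ are simultaneously diagonalized, so $\la(V^*S_\cF V) = \la(S_\cF VV^*)$ is computed by multiplying the diagonal entries, matching $\mu$ on the nose.

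The only mild care needed is bookkeeping around orderings: $\mu$ is already in $(\R^d_{>0})\da$ by construction (it is $\exp \nu$ with $\nu = \nu\da$ from Remark \ref{resultados previos1}), so the partial products in item 2 really are partial products of the decreasing rearrangement, matching the form of log-majorization. No new obstacle appears beyond what was already resolved in Theorem \ref{lids multi y acha}; in particular, nothing from Section \ref{sec append} needs to be re-invoked here, since the appendix work feeds into Theorem \ref{lids multi y acha} itself.
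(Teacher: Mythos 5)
Your proposal is correct and matches the paper's intent exactly: the paper states Corollary \ref{teo perturb multi frames expansive} with an immediate \textsc{qed}, treating it as a direct translation of Theorem \ref{lids multi y acha} via $S_{V\cdot\cF}=V^*S_\cF V$, which is precisely what you carry out. Your bookkeeping (the determinant forcing equality at $k=d$, the observation that $\mu=\mu\da$, and the reduction of the equality case to item 3 of Theorem \ref{lids multi y acha}) is accurate and fills in exactly the routine details the paper leaves implicit.
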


\section{Appendix - multiplicative Lidskii's inequality}\label{sec append}

Although majorization and log-majorization are not a total relations in $\R^d$ they appear naturally in many situations in matrix analysis. Some examples of this phenomenon are the Schur-Horn theorem characterizing the main diagonals of unitary conjugates of a selfadjoint matrix $A$, and Horn's relations between the eigenvalues and singular values of matrices. Another interesting example of a majorization relation is Lidskii's inequality for selfadjoint matrices; namely, if  $A,\,B$ are selfadjoint matrices then $\lambda(A)+\lambda^\uparrow(B)\prec \lambda(A+B)$. Lidskii's inequality has a multiplicative version obtained by Li and Mathias in \cite{LiMat}. In the positive invertible case, Li and Mathias's results can be put into the deep theory of singular value inequalities developed by Klyachko in \cite{Klya}. Next, we describe these results in detail and characterize the case of equality in Li-Mathias's multiplicative inequality.
The following inequalities are the multiplicative version of Lidskii's inequality: 

\begin{teo}[\cite{LiMat}]\label{LiMat}\rm 
Let $S\in\matpos$ and $V\in \matinvd$. Let $J\inc \IN{d}\,$ be such that  $|J|=k$ and 
$\lambda_i(S)>0$ for $i\in J$. Then we have that 
\begin{equation}
\prod_{i=1}^k \lambda_{d+1-i}(V^*V)\leq \prod_{i\in J} \frac{\lambda_i(V^*SV)}{\lambda_i(S)}\leq \prod_{i=1}^k\lambda_i(V^*V)\ . 
\QEDP
\end{equation}
\end{teo}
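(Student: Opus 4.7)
The plan is to reduce the stated multiplicative Lidskii-type inequality to the singular-value form of the product inequality of Li and Mathias.

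First I would reduce to the case $S\in\matinvd^+$. Since the eigenvalues of $S$ are listed in decreasing order and $\la_i(S)>0$ for $i\in J$, necessarily $J\inc \IN{r}$ with $r=\rk S$. Perturbing $S$ to $S_\eps\igdef S+\eps\,P_{\ker S}\in\matinvd^+$ and letting $\eps\to 0^+$, the continuity of eigenvalues (with $\la_i(S_\eps)$ bounded away from $0$ for $i\in J$) reduces the claim to the invertible case.

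Next I would convert the inequality into a singular-value one. Factoring $V^*SV=(S\rai V)^*(S\rai V)$ yields $\la_i(V^*SV)=\sigma_i(S\rai V)^2$; combined with $\la_i(S)=\sigma_i(S\rai)^2$ and $\la_i(V^*V)=\sigma_i(V)^2$, the desired double inequality, after taking square roots, is equivalent to
\[
\prod_{i=1}^k\sigma_{d+1-i}(V)\ \leq\ \prod_{i\in J}\frac{\sigma_i(S\rai V)}{\sigma_i(S\rai)}\ \leq\ \prod_{i=1}^k\sigma_i(V),
\]
which is precisely the Li--Mathias singular-value inequality applied to the pair $A=S\rai$, $B=V$ of invertible matrices.

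For a self-contained argument at the singular-value level, I would pass to the $k$-th compound matrix $C_k(M)\in L(\wedge^k\C^d)$, exploiting the multiplicativity $C_k(AB)=C_k(A)C_k(B)$, the identity $C_k(M)^*=C_k(M^*)$, and the identification of the singular values of $C_k(M)$ with the $k$-fold products $\prod_{i\in I}\sigma_i(M)$ over $k$-subsets $I\inc\IN{d}$, sorted decreasingly. Ostrowski's theorem applied to $C_k(AB)^*C_k(AB)=C_k(B)^*C_k(A^*A)C_k(B)$ at the top index $j=1$ gives the inequality for $J=\IN{k}$ directly, since then $\sigma_1(C_k(M))=\prod_{i=1}^k\sigma_i(M)$ for both $M=A$ and $M=AB$, and the extreme singular values of $C_k(B)$ are $\prod_{i=1}^k\sigma_i(B)$ and $\prod_{i=1}^k\sigma_{d+1-i}(B)$.

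The main obstacle is to pass from $J=\IN{k}$ to an arbitrary $k$-subset $J$: the index $j$ of $C_k(A)$ at which the product $\prod_{i\in J}\sigma_i(A)$ sits depends on the relative ordering of all subset-products by magnitude, and in general this ordering differs from that of $C_k(AB)$, so a direct Ostrowski bound at a matching $j$ is not available. Closing this gap is the combinatorial core of the Li--Mathias argument, which proceeds via a Cauchy-type interlacing on compounds together with a continuity reduction to generic invertible $A,B$ with simple spectra; this is the delicate step where the bulk of the work lies, and for the present paper it is perfectly reasonable to invoke \cite{LiMat} as a black box.
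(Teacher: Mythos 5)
The paper does not prove Theorem \ref{LiMat}: it is stated as a citation from \cite{LiMat}, with the QED symbol placed directly after the displayed inequality, so the reference is invoked as a black box. Your plan is therefore consistent with the paper's treatment; the reductions you carry out (invertibility of $S$ via perturbation, the singular-value reformulation via $S\rai V$, and the compound-matrix/Ostrowski argument settling the case $J=\IN{k}$) are correct, and you correctly locate the genuinely nontrivial content of the theorem --- the passage to an arbitrary $k$-subset $J$ --- which the paper, like you, defers to \cite{LiMat}.
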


\pausa
We complement this result by characterizing the case of equality in the inequalities above. 
We shall use some results of \cite [Section 8]{MRS12}, where we study the case of equality in the 
additive Lidskii's inequalities, and also 
some combinatorial problems. 
We begin by revisiting the following well known inequality from matrix theory. Our interest relies in the case of equality.

\begin{lem}[Weyl's inequalities] \label{lemWineq}
Let $A,\,B \in \mat$ be  Hermitian matrices. Then,
\beq\label{eqW1}
\lambda_i(A+B)\leq \lambda_i(A)+\lambda_{1}(B) \peso{for every} i\in\IN{d} \ . 
\eeq
If there is $i\in \IN{d}$ such that 
$\lambda_i(A+B)=\lambda_i(A)+\lambda_{1}(B) $ then there is a unit vector $x$ such that 
$$ 
A\,x=\lambda_i(A)\,x \py B\,x=\lambda_{1}(B)\,x \ .$$
\end{lem}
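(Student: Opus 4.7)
The plan is to establish the inequality via the Courant--Fischer min--max principle, then handle the equality case by a dimension-count argument that produces a unit vector lying simultaneously in a low-energy subspace for $A$ and a high-energy subspace for $A+B$.

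For Eq.\ \eqref{eqW1}, recall the min--max formula (with eigenvalues in decreasing order)
$$
\lambda_i(A+B)=\min_{\dim W\,=\, d-i+1}\ \max_{x\in W,\ \|x\|=1}\api (A+B)x\coma x\cpi\ .
$$
Let $\{v_j\}_{j\in\IN{d}}$ be an o.n.b.\ of eigenvectors of $A$ with $A\,v_j=\lambda_j(A)\,v_j$, and set $W_0=\gen\{v_i\coma v_{i+1}\coma\dots\coma v_d\}$, which has dimension $d-i+1$. For any unit $x\in W_0$ one has $\api Ax\coma x\cpi \le \lambda_i(A)$, while $\api Bx\coma x\cpi\le \lambda_1(B)$ for every unit $x\in\cH$. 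Taking $W=W_0$ in the min--max formula gives Eq.\ \eqref{eqW1}.

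For the equality case, assume $\lambda_i(A+B)=\lambda_i(A)+\lambda_1(B)$. Let $\{w_j\}_{j\in\IN{d}}$ be an o.n.b.\ of eigenvectors of $A+B$ with $(A+B)w_j=\lambda_j(A+B)\,w_j$, and set $F=\gen\{w_1\coma\dots\coma w_i\}$, of dimension $i$. Since $\dim W_0+\dim F=(d-i+1)+i=d+1>d$, there exists a unit vector $x\in W_0\cap F$. For this $x$, on one hand $\api (A+B)x\coma x\cpi\ge \lambda_i(A+B)$ since $x\in F$, and on the other hand $\api Ax\coma x\cpi\le \lambda_i(A)$ (as $x\in W_0$) and $\api Bx\coma x\cpi\le\lambda_1(B)$. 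Combining with the standing hypothesis,
$$
\lambda_i(A)+\lambda_1(B)\le \api Ax\coma x\cpi+\api Bx\coma x\cpi\le \lambda_i(A)+\lambda_1(B)\ ,
$$
so both partial inequalities must be equalities.

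The last step, and the only mildly delicate point, is upgrading the scalar equalities $\api Ax\coma x\cpi=\lambda_i(A)$ and $\api Bx\coma x\cpi=\lambda_1(B)$ to genuine eigenvector identities. Expanding $x=\sum_{j\ge i}\alpha_j\,v_j$ in $W_0$ yields $\api Ax\coma x\cpi=\sum_{j\ge i}\lambda_j(A)\,|\alpha_j|^2$, a convex combination of scalars $\le \lambda_i(A)$; equality forces $\alpha_j=0$ whenever $\lambda_j(A)<\lambda_i(A)$, so $x$ lies in the eigenspace of $A$ associated to $\lambda_i(A)$, giving $A\,x=\lambda_i(A)\,x$. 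The same argument applied to an eigenbasis of $B$ yields $B\,x=\lambda_1(B)\,x$, completing the proof.
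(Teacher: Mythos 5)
Your proof is correct. The paper itself does not present an argument for this lemma: it simply observes that the statement is a particular case of Lemma~8.1 of the cited companion paper (MRS12), where the equality case of the additive Lidskii inequality is analyzed in a more general form (namely, the family of Weyl-type bounds $\lambda_{i+j-1}(A+B)\le\lambda_i(A)+\lambda_j(B)$ and their equality cases). So your contribution is a genuinely self-contained alternative: the inequality by Courant--Fischer applied to the test subspace $W_0=\gen\{v_i,\dots,v_d\}$, and the equality case by the dimension count $\dim W_0+\dim F=d+1$ that produces a common unit vector $x$ in a low-energy subspace for $A$ and a high-energy subspace for $A+B$, followed by the convexity argument forcing $x$ into the relevant eigenspaces of $A$ and $B$. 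This is exactly the kind of argument one would expect to lie behind the cited Lemma~8.1, and it is correct as written; the only stylistic quibble is that the final upgrade from scalar to eigenvector identities could be phrased once and invoked twice, since it is the same observation that a weighted average of numbers bounded by $c$ equals $c$ only if all weight sits on the number $c$. The trade-off is clear: the paper's route is shorter but opaque without the reference in hand, while yours is longer but entirely transparent and requires only the min--max theorem.
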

\proof It is a particular case of \cite[Lemma 8.1]{MRS12}. \QED

\begin{pro}[Ostrowski's inequality] \label{ostrowski}
Let $S\in\matpos$ and let $V\in \mat$ be such that $V^*V\geq I$. Then, for $i\in \IN{d}$ we have that  
\begin{equation} \lambda_i(S)\leq \lambda_i(V^*SV) \ .\end{equation} Moreover, if there exists $J\subset \IN{d}$ such that $\lambda_i(S)=\lambda_i(V^*SV)$ for $i\in J$, then there exists an o.n.s. $\{v_i\}_{i\in I}\subset \C^d$ such that $$|V^*|\ v_i=v_i \peso{and}  S\ v_i=\lambda_i(S)\,v_i \peso{for} i\in I\ .  $$
\end{pro}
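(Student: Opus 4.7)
The plan is to rewrite $V^*SV$ as a positive additive perturbation of $S$ and reduce both the inequality and its equality case to Weyl's inequality (Lemma \ref{lemWineq}). First, since $V^*V \geq I$ forces $V$ to be invertible, the polar decomposition $V = |V^*|U$ with $U\in\matud$ yields $V^*SV = U^*\,|V^*|\, S\, |V^*|\, U$, so $\la_i(V^*SV) = \la_i(|V^*|\, S\, |V^*|)$; moreover the conclusion refers to $|V^*|$ rather than $V$. Hence I may replace $V$ by $|V^*|$, and assume henceforth that $V$ is positive with $V \geq I$ (so also $V^2 \geq I$).

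By the cyclic property of eigenvalues, $\la_i(VSV) = \la_i(V^2 S) = \la_i(S^{1/2} V^2 S^{1/2})$. Write
$$
S^{1/2} V^2 S^{1/2} \,=\, S + B\ , \qquad B \,:=\, S^{1/2}(V^2 - I) S^{1/2} \in \matpos\ ,
$$
which is positive since $V^2-I\geq 0$. Applying Lemma \ref{lemWineq} to the Hermitian pair $S+B$ and $-B$ gives
$$
\la_i(S) \,=\, \la_i((S+B) + (-B)) \,\leq\, \la_i(S+B) + \la_1(-B) \,=\, \la_i(S+B) - \la_d(B) \,\leq\, \la_i(S+B)\ ,
$$
which is the Ostrowski inequality.

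For the equality case, suppose $\la_i(S) = \la_i(V^*SV) = \la_i(S+B)$ for some $i\in J$. The chain above is then forced to be an equality, yielding $\la_d(B) = 0$ together with the equality hypothesis of Lemma \ref{lemWineq}; hence there exists a unit vector $x_i$ with $(S+B)x_i = \la_i(S+B)\,x_i$ and $Bx_i = 0$. From $Bx_i=0$ one gets $Sx_i = \la_i(S)\,x_i$ and $\|(V^2-I)^{1/2} S^{1/2}x_i\|^2 = 0$, so $V^2 S^{1/2}x_i = S^{1/2}x_i$. When $\la_i(S)>0$ we have $S^{1/2}x_i = \la_i(S)^{1/2}x_i$, so $V^2 x_i = x_i$ and, by the positivity of $V$, $V x_i = x_i$.

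The principal obstacle is to upgrade the independently chosen $x_i$'s to an orthonormal system, since Lemma \ref{lemWineq} furnishes each one separately. I would handle this by induction on $|J|$: set $i_0 = \min J$, so that $x_{i_0}$ is a common eigenvector of $S$ and $V$ (hence of $V^*SV$) with eigenvalues $\la_{i_0}(S)$, $1$, and $\la_{i_0}(S)$ respectively. Both $S$ and $V$ leave $M := x_{i_0}^\perp$ invariant, and the compressions $S|_M$ and $V|_M \geq I$ (still satisfying $(V|_M)^*(V|_M)\ge I$) have spectra obtained from those of $S$ and $V$ by deleting one copy of $\la_{i_0}(S)$ and $1$ respectively; consequently $(V|_M)(S|_M)(V|_M)$ has spectrum obtained from $V^*SV$ by removing one copy of $\la_{i_0}(S)$. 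Therefore, for each $j \in J \setminus \{i_0\}$, the equality $\la_j(S) = \la_j(V^*SV)$ becomes the analogous equality at position $j-1$ for the reduced problem on $M$. The inductive hypothesis applied inside $M$ then produces the remaining eigenvectors orthogonal to $x_{i_0}$, completing the desired o.n.s.
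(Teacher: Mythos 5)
Your proposal is correct on the domain where the statement is actually true and used, but it takes a genuinely different route from the paper. The paper fixes $i\in J$, translates $S\mapsto S-\lambda_i(S)I$ so that, by Sylvester's law of inertia, $\lambda_i\bigl(V(S-\lambda_i(S)I)V\bigr)=0$, and then applies Lemma \ref{lemWineq} to the pair $A=VSV$, $B=-\lambda_i(S)V^2$, getting a Weyl equality whose equality case forces $V^2x=x$ and $Sx=\lambda_i(S)x$ for a common eigenvector $x$. You instead use the single, index-independent decomposition $\lambda_i(V^*SV)=\lambda_i(S^{1/2}V^2S^{1/2})=\lambda_i(S+B)$ with $B=S^{1/2}(V^2-I)S^{1/2}\geq 0$, apply Weyl's inequality to the Hermitian pair $(S+B,-B)$, and deduce from $Bx=0$ together with $(V^2-I)\geq 0$ that $(V^2-I)S^{1/2}x=0$. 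This is cleaner in that one fixed positive perturbation $B$ serves all indices simultaneously, whereas the paper recomputes a shifted operator for each $i\in J$; both proofs then do essentially the same induction on $|J|$ after passing to the reducing subspace $x^\perp$.

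One point worth flagging explicitly: you write ``when $\lambda_i(S)>0$'' but do not treat the case $\lambda_i(S)=0$, which the statement (with $S\in\matpos$ only) formally allows. In fact the assertion is false there: take $S=\operatorname{diag}(1,0)$ and $V=\operatorname{diag}(1,2)$, so $V^*V\geq I$, $V^*SV=\operatorname{diag}(1,0)$, $\lambda_2(S)=\lambda_2(V^*SV)=0$, yet $\ker S$ and $\ker(|V^*|-I)$ are orthogonal lines, so no unit vector satisfies both conditions. The paper's proof has the same silent gap (it divides by $\lambda_i(S)$ when concluding $\lambda_d(V^2)=1$). This is harmless in context because Proposition \ref{ostrowski} is only invoked in Theorem \ref{teo carac Li-Mat} with $S\in\matinvd^+$, where every $\lambda_i(S)>0$; but the correct hypothesis should be $S\in\matinvd^+$ (or that $\lambda_i(S)>0$ for $i\in J$), and you should state this reduction rather than leave the zero case unresolved.

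Finally, the paper writes ``$\{v_i\}_{i\in I}$'' where $I$ should read $J$ (a typo in the source), and your proof proposal correctly interprets it this way.
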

\begin{proof} 
The first part of the statement is well known (see for example \cite[Thm. 5.4.9.]{Horn}).
Hence, we prove the second part of the statement by induction on $|J|$, 
the number of elements of $J$. 
Assume first that $V\in\matpos$ is such that $V^2\geq I$. Fix $i\in J$ and 
notice that, by Sylvester's law of inertia, $\lambda_i(V\,(S-\lambda_i(S)\, I)\,V)=0$, 
since $\lambda_i(S-\lambda_i(S)\, I)=0$. By Lemma 
\ref {lemWineq} 
we have that 
\beq\label{lad=1}
\lambda_i(VSV)- \lambda_i(S) \lambda_d(V^2)
= \lambda_i(VSV)+\lambda_1(-\lambda_i(S)\,V^2)
\stackrel{\ref{lemWineq}}{\geq} \lambda_i(V\,(S-\lambda_i(S)\, I)\,V)=0\ . 
\eeq
Since $\lambda_d(V^2)\geq 1$ and $\lambda_i(VSV)=\lambda_i(S)$ ( $i\in J$ ) we conclude that $\lambda_d(V^2)=1$. Moreover, by the equality in Eq. \eqref{lad=1} and Lemma \ref{lemWineq}, 
there is $x\in\C^d$, $\|x\|=1$ such that 
$ VSV x \stackrel{\ref{lemWineq}}{=}\lambda_i(VSV)\ x 
$ and
$$-\lambda_i(S) V^2x 
\stackrel{\ref{lemWineq}}{=} \lambda_1(-\lambda_i(S) V^2) \ x 
= -\lambda_i(S) \,\lambda_d(V^2)\ x
=-\lambda_i(S) \ x \ .
$$
Hence $V^2x=x$ and then $Vx=x$. Thus, 
$
VSV x = \lambda_i(VSV)\ x \implies
S x=\lambda_i(VSV) \ x=\lambda_i(S)\ x\ .
$

\pausa 
This proves the statement for $|J|=1$. If we assume that $|J|>1$ then we fix $v_i=x$ and consider $\mathcal W=\{v_i\}^\perp$, which reduces both  $A$ and $V$. 
Therefore an easy inductive argument involving the restriction 
$S|_\mathcal W$ and $V|_\mathcal W$ shows the general case. 

\pausa
If we now consider an arbitrary $V\in\mat$ such that $V^*V\geq I$ then let $V^*=U\,|V^*|$ 
be the polar decomposition of $V^*$. In this case $V^*SV=U |V^*|\ S\ |V^*| U^*$ so 
that $\lambda_i(V^*SV)=\lambda_i(|V^*|\ S \ |V^*|)$ for every $i\in \IN{d}\,$, 
where $|V^*|^2=VV^*=U^*(V^*V)U\geq I$. These last facts together with the case 
of equality for positive expansions prove the statement.
\end{proof}

\pausa

\pausa
In order to state our results we introduce the following notion.

\begin{fed}\label{multiplicative matching} \rm 
Let $S\in\matinvd^+$ and let $V\in\matinvd$. We say that $V$ is an upper 
multiplicative matching (UMM) of $S$ (resp. lower MM or LMM of $S$) 
if there exists a family $\{J_k\}_{k\in\IN{d}}$ 
such that $J_k\inc J_{k+1}\inc \IN{d}$ for $1\leq k\leq d-1$, $|J_k| =k$ for 
$k\in \IN{d}$ and such that 
$$ 
\prod_{i\in J_k} \frac{\lambda_i(V^*SV)}{\lambda_i(S)}=\prod_{i=1}^k\lambda_i(V^*V) 
\ , \  \ \ k\in \IN{d}
$$ 
\Big(resp. $\prod_{i=1}^k\lambda_{i}\ua(V^*V) 
= \prod_{i=1}^k\lambda_{d+1-i}(V^*V) 
= \prod_{i\in J_k} \frac{\lambda_i(V^*SV)}{\lambda_i(S)} \,$, $k\in \IN{d}$\Big).
\EOE
\end{fed}

\begin{teo}\label{teo carac Li-Mat}
Let $S\in\matinvd^+$ and let $V\in\matinvd$ be 
a UMM or a LMM of $S$. Then $S$ and $|V^*|$ commute.
\end{teo}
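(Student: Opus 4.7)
The plan is to reduce to the case where $V$ is positive, extract a common eigenvector of $S$ and $|V^*|$ via Ostrowski's inequality, and then induct on the dimension $d$. By the polar decomposition $V^* = U|V^*|$, the matrices $V^*V = U|V^*|^2 U^*$ and $V^*SV = U|V^*|S|V^*|U^*$ have the same spectra as $|V^*|^2$ and $|V^*|S|V^*|$ respectively, so $V$ is UMM (resp.\ LMM) of $S$ with chain $\{J_k\}$ if and only if $|V^*|$ is. Replacing $V$ by $|V^*|$, I may assume $V$ is positive definite, and the desired conclusion becomes $SV = VS$. The key reformulation is that, with $J_0 := \emptyset$, the nested chain $\{J_k\}_{k=0}^d$ determines a permutation $\pi$ of $\IN{d}$ via $\{\pi(k)\} = J_k \setminus J_{k-1}$, and dividing consecutive chain equalities yields
\[ \frac{\lambda_{\pi(k)}(VSV)}{\lambda_{\pi(k)}(S)} = \lambda_k(V^2) \quad (\text{UMM}) \qquad \text{or} \qquad \frac{\lambda_{\pi(k)}(VSV)}{\lambda_{\pi(k)}(S)} = \lambda_{d+1-k}(V^2) \quad (\text{LMM}) \]
for every $k \in \IN{d}$.

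Scaling $V \mapsto cV$ multiplies both sides of each such equation by $c^{2}$, so the UMM/LMM property is scale invariant, and I may further assume $\lambda_d(V^2) = 1$, i.e.\ $V \ge I$. Proposition \ref{ostrowski} then gives $\lambda_i(VSV) \ge \lambda_i(S)$ for all $i \in \IN{d}$, while the reformulated equation at $k = d$ (UMM) or $k = 1$ (LMM) reads $\lambda_{j^*}(VSV) = \lambda_{j^*}(S)$ with $j^* := \pi(d)$ or $\pi(1)$, respectively. The equality case of Proposition \ref{ostrowski} applied to the singleton index set $\{j^*\}$ then produces a unit vector $v \in \C^d$ with $Vv = v$ and $Sv = \lambda_{j^*}(S)\, v$: a common eigenvector of $S$ and $V$.

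I close the argument by induction on $d$, the base case $d = 1$ being trivial. For the inductive step, set $W = v^\perp$, which is both $S$- and $V$-invariant, and let $\tilde S = S|_W$, $\tilde V = V|_W$. Since $\lambda_d(V^2) = 1$ is the smallest eigenvalue of $V^2$ and is used up by $v$, one has $\lambda_i(\tilde V^2) = \lambda_i(V^2)$ for $i \in \IN{d-1}$. Writing $\phi : \IN{d-1} \to \IN{d} \setminus \{j^*\}$ for the order-preserving bijection, $\lambda_i(\tilde S) = \lambda_{\phi(i)}(S)$, and the identity $\lambda_{j^*}(VSV) = \lambda_{j^*}(S)$ places the eigenvalue of $VSV$ associated to $v$ at position $j^*$ in the sorted spectrum, so $\lambda_i(\tilde V \tilde S \tilde V) = \lambda_{\phi(i)}(VSV)$. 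In the UMM case, $j^* = \pi(d) \notin J_k$ for $k \le d - 1$, and $\tilde J_k := \phi^{-1}(J_k)$ is a nested chain in $\IN{d-1}$ witnessing that $\tilde V$ is UMM of $\tilde S$; the LMM case is analogous, with $j^* = \pi(1) \in J_k$ for all $k \ge 1$, and one sets $\tilde J_k := \phi^{-1}(J_{k+1} \setminus \{\pi(1)\})$. The inductive hypothesis then gives $\tilde S \tilde V = \tilde V \tilde S$, which combined with the common eigenvector $v$ forces $SV = VS$ on all of $\C^d$. The main technical hurdle is precisely this chain-transport step: one must verify that the removed eigenvalue sits at the same index $j^*$ in the sorted spectra of both $S$ and $VSV$, which is exactly the combination of Ostrowski's inequality with the chain-derived equality; multiplicities in any of the spectra of $S$, $V^2$, or $VSV$ require only unambiguous multiset bookkeeping and do not affect the argument.
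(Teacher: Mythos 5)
Your proof is correct and it takes a genuinely different route from the paper's. The paper uses the ``splitting'' construction from Li--Mathias: after reducing to $V\in\matinvd^+$ via polar decomposition, it fixes an eigenvalue jump $\lambda_{k-1}(V)>\lambda_k(V)$, rescales to $V_k=\lambda_k^{-1}V$, builds a companion matrix $B_k$ that caps the small eigenvalues at $1$, and then exploits the determinant equality $\prod_{i\in\IN{d}}\lambda_i(B_kS B_k)/\lambda_i(S)=\det(B_k^2)$ to force equality in a whole block of Ostrowski inequalities at once; this shows each spectral projection $P_k$ of $V$ commutes with $S$, whence $V$ does. The LMM case is then reduced to the UMM case via the substitution $(S,V)\mapsto(VSV,V^{-1})$. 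Your argument achieves the same end with a cleaner, more elementary structure: you normalize so $\lambda_d(V^2)=1$, read off from a single ``edge'' of the chain (at $k=d$ for UMM, $k=1$ for LMM) the scalar equality $\lambda_{j^*}(VSV)=\lambda_{j^*}(S)$, invoke the equality case of Ostrowski at the singleton $\{j^*\}$ to peel off one common eigenvector, and induct on $d$. This avoids the ancillary $V_k$, $B_k$ and the determinant trick entirely, handles UMM and LMM symmetrically rather than by a separate reduction, and isolates the only delicate point --- the transport of the nested chain to the $(d-1)$-dimensional complement --- which you treat correctly: the removed value $\lambda_{j^*}(S)$ sits at the same sorted index $j^*$ in both $\lambda(S)$ and $\lambda(VSV)$ precisely because of the Ostrowski inequalities together with the extracted equality, so the order-preserving bijection $\phi$ reindexes $\tilde S$, $\tilde V^2$, and $\tilde V\tilde S\tilde V$ consistently and the chain $\tilde J_k$ witnesses UMM (resp.\ LMM) of $\tilde V$ on $\tilde S$. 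What the paper's global argument buys is that it exhibits commutation with all spectral projections simultaneously and stays closer to the Li--Mathias machinery; what yours buys is brevity, a trivial base case, and no auxiliary matrix constructions.
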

\begin{proof} 
We can assume that $V$ is not a multiple of the identity. 
We use the splitting technique considered in \cite{LiMat}.
Let $V\in\matinvd$ be an UMM of $S$. 
Assume further that $V\in\matinvd^+$ and let $\{u_i\}_{i\in\IN{d}}\,$ be a o.n.b for $\C^d$ such that 
$V= \suml_{i\in\IN{d}} \la_i(V) \, u_i\otimes u_i\,$. 
Let $2\leq k\leq {d}$ be such that $\lambda_{k-1}>\lambda_k\,$.  
Let $V_k=\lambda_k^{-1}\,V$, which is also an UMM for $S$. 
In this case $\lambda_i(V_k)=\frac{\lambda_i(V)}{\lambda_k(V)}$ for 
every $i\in \IN{d}\,$. In particular, $\lambda_k(V_k)=1$.
Let $B_k= \suml_{i\in\IN{d}} \mu_i \,\cdot  \, u_i\otimes u_i\in\matinvd^+\,$, 
where $$
\mu
=(\lambda_1(V_k)\coma \ldots\coma \lambda_{k-1}(V_k)\coma \uno_{d-k} 
)\in (\R_{>0}^d)\da \  .
$$
Notice that $W_k=\ker(B_k-I) =\gen\{u_i: k\le i \le d\} \implies 
\dim \,W_k=d+1-k$. Also notice that the orthogonal projection $P_k \igdef P_{W_k}$  
 coincides with the spectral projection of $V$ corresponding 
to the interval $(0,\lambda_k(V)]$.

\pausa
On the other hand, by construction of $B_k\,$, we see that $B_k\geq I$ and 
$V_k^{-1}B_k=B_kV_k^{-1}\geq I$. Using that $V_k$ is an UMM of $S$,  we can take $J_{k-1}\inc\IN{d}\,$ 
such that $|J_{k-1}|=k-1$  and 
\begin{equation}\label{hip1} \prod_{i\in J_{k-1}} \frac{\lambda_i(V_kSV_k)}{\lambda_i(S)}=\prod_{i=1}^{k-1}\lambda_i(V_k^2)\ .\end{equation}
Then, by Ostrowski's inequality we get that 
$$ \prod_{i\in J_{k-1}} \frac{\lambda_i(V_k S\, V_k)}{\lambda_i(S)} \leq \prod_{i\in J_{k-1}} \frac{\lambda_i((B_k\,V_k^{-1})(V_kS\,V_k)(V_k^{-1}\,B_k))}{\lambda_i(S)}=\prod_{i\in J_{k-1}}\frac{\lambda_i(B_k S \,B_k)}{\lambda_i(S)}\ . $$ Using Ostrowski's inequality again, we see that $\frac{\lambda_i(B_k S\,B_k)}{\lambda_i(S)}\geq 1$ for every $i\in\IN{d}$ and therefore
\begin{eqnarray*}
\prod_{i\in J_{k-1}} \frac{\lambda_i(V_kS\,V_k)}{\lambda_i(S)}&\leq&  
\prod_{i\in J_{k-1}}\frac{\lambda_i(B_k S \,B_k)}{\lambda_i(S)} 
\leq \prod_{i\in\IN{d}}\frac{\lambda_i(B_k S \,B_k)}{\lambda_i(S)}\\
&=&\frac{\det(B_kS\,B_k)}{\det(S)}=\det(B_k^2)=\prod_{i=1}^{k-1} \lambda_i(V_k^2) \ .
\end{eqnarray*} 
By Eq. \eqref{hip1} we see that the previous inequalities are actually equalities.
Hence, if we let $J_{k-1}^c=\IN{d}\setminus J_{k-1}$ then $|J_{k-1}^c|=d+1-k$ and 
$$
\prod_{i\in J_{k-1}^c}\frac{\lambda_i(B_kS\,B_k)}{\lambda_i(S)}=1\implies 
 \lambda_i(B_kS\,B_k)= \lambda_i(S) \peso{for} i\in J_{k-1}^c\ .
$$
By the case of equality in Ostrowski's inequality in Proposition \ref{ostrowski} we conclude that there exists an o.n.s. $\{v_i\}_{i\in J_{k-1}^c}\inc \C^d$ such that 
\beq\label{aaa}
B_kv_i=v_i \peso{and} Sv_i=\lambda_i(S) \peso{for} i\in J_{k-1}^c \ .
\eeq
Then we conclude that $\{v_i\}_{i\in J_{k-1}^c}$ is another o.n.b. of $W_k\,$. Hence 
$P_k=\sum_{i\in J_{k-1}^c} v_i\otimes v_i\,$ and, by Eq. \eqref{aaa}, 
we conclude that $P_k$ and $S$ commute. 
Finally, since $V$ can be written as a linear combination of its spectral projections 
$P_k$  (for $\lambda_{k-1}>\lambda_k\,$) and the identity $I$, we see that $V$ and $S$ commute in this case.
The general case for arbitrary $V\in\matinvd$ follows from the positive case with the 
reduction described at the end of the proof of Proposition \ref{ostrowski}.

\pausa
Assume  now that $V\in\matpos$
is a LMM of $S$.
Then $V^{-1}$ is an UMM for $VSV$. Indeed, if $J_k\inc \IN{d}$ is such that $$ 
\prod_{i=1}^k\lambda_{i}\ua(V^2)= 
\prod_{i\in J_k} \frac{\lambda_i(VSV)}{\lambda_i(S)} \ , 
$$ 
then we have that
$$ 
\prod_{i=1}^k \lambda_i(V^{-2})=
\left( \prod_{i=1}^k\lambda_{i}\ua(V^2)
\right)^{-1} =\left( \prod_{i\in J_k} \frac{\lambda_i(VSV)}{\lambda_i(S)}\right)^{-1} 
=\prod_{i\in J_k} \frac{\lambda_i(V^{-1}(VSV)\,V^{-1})}{\lambda_i(VSV)}\ .
$$ 
By the first part of this proof, we conclude that $V^{-1}$ and $VSV$ commute, which implies that $S$ and $V$ commute. If $V\in\matinvd$ is arbitrary we conclude that $S$ and $|V^*|$ commute 
with the reduction described at the end of the proof of Proposition \ref{ostrowski}.
\end{proof}

\pausa
Now we can re-state and prove Theorem \ref {teo hay max y min mayo}: 

\pausa
{\bf Theorem \ref{teo hay max y min mayo}} 
Let $S\in\matinvd^+$ and let $\ga\in (\R_{>0}^d)\da$. 
Then, for every $V\in\matinvd$ such that $\lambda(V^*V)=\ga$ we have that 
$$
\lambda(S)\circ \ga\ua\prec_{\log}\la(V^*SV)\prec_{\log}\lambda(S)\circ \ga
\in (\R_{>0}^d)\da \ .
$$
Moreover, if $\lambda(V^*SV)=(\lambda(S)\circ \ga\ua)\da$ 
(resp. 
$\lambda(V^*SV)=\lambda(S)\circ \ga$) then there exists an o.n.b. 
$\{ v_i\}_{i\in \IN{d}}$ of $\C^d$ such that 
\beq\label{hay base bis}
S=\sum_{i\in \IN{d}} \la_i(S)\ v_i\otimes v_i \peso{and} |V^*|=
\sum_{i\in \IN{d}} \ga_{d+1-i}\rai\ v_i\otimes v_i\  
\eeq 
\big(\, resp. $S=\sum_{i\in \IN{d}} \lambda_i(S)\ v_i\otimes v_i$ and 
$|V^*|=\sum_{i\in \IN{d}} \ga_i\rai 
\ v_i\otimes v_i$\,\big).

\proof 
Let $S$ and $V$ be as above. Assume further that $V\in\matinvd^+$ and notice that then $\lambda(V\,S\,V)=\lambda(S^{1/2}V^2\, S^{1/2})$. By Theorem \ref{LiMat} we get that, 
for every $J\subset \IN{d}$ with $|J|=k$,  
$$
\prod_{i\in J}\lambda_{i}\ua(S) \ \lambda_{i}(V^2)=
\prod_{i\in J}\frac{\lambda_i(S^{-1/2}(S^{1/2}V^2 S^{1/2})S^{-1/2})}{\lambda_i(S^{-1})}\leq\prod_{i=1}^k\lambda_i(S^{1/2}V^2S^{1/2})\ .
$$
This 
shows that $\lambda\circ\lambda\ua(S)\prec_{\log}\lambda(V\,S\,V)$ 
or equivalently, that $\lambda(S)\circ \la\ua\prec_{\log}\lambda(V\,S\,V)$. 
Moreover, the previous facts also show that if 
$\lambda(V\,S\,V)=(\lambda(S)\circ \lambda\ua)\da$ then $S^{-1/2}$ is an 
UMM of $S^{1/2}V^2S^{1/2}$. By Theorem \ref{teo carac Li-Mat} 
we see that  $S^{-1/2}$ and $S^{1/2}V^2 S^{1/2}$ commute, 
 which in turn implies that $S$ and $V$ commute.

\pausa
Since $S$ and $V$ commute we conclude that there exists an o.n.b. 
$\{w_i\}_{i\in\IN{d}}$ of $\C^d$ such that 
$$ 
S=\sum_{i\in\IN{d}} \la_i(S)\ w_i\otimes w_i \py
V=\sum_{i\in\IN{d}} \la\ua_{\sigma(i)}(V)\ w_i\otimes w_i 
$$ 
for some permutation
$\sigma\in\mathbb S_d\,$. That is, in this case we have that 
$$
\Big(\,\lambda(S)\circ \lambda\ua(V^2)\,\Big)\da=\lambda(VSV)
=\Big(\, \lambda(S)\circ \lambda\ua_\sigma(V^2) \, \Big)\da .
$$
Notice that by replacing $S$ and $V$ by $tS$ and $tV$ for sufficiently large $t>0$ 
we can always assume that $S-I\in\matinvd^+$ and $V-I\in\matinvd^+$. 
Using the properties of the logarithm, 
we conclude that the vectors $\log \, \lambda(S)$ and $ \log \, \lambda(V^2)\in (\R_{>0})\da$ 
are such that 
$$
\Big(\, \log \, \lambda(S)+ \log \, \lambda\ua(V^2)\, \Big)\da
=\Big(\, \log \, \lambda(S) + \log \, \lambda_\sigma\ua(V^2) \, \Big)\da \ . 
$$
By \cite[Proposition 8.6 and Remark 8.7]{MRS12} 
we conclude 
that $\log \, \lambda(S) =\log \, \lambda_\sigma(S)$. That is, if we set $v_i=w_{\sigma^{-1}(i)}$ for $i\in \IN{d}$ then
the o.n.b. $\{v_i\}_{i\in \IN{d}}$ satisfies the conditions in Eq. \eqref{hay base bis}.
The general case, for $V\in\matinvd$, follows by the reduction described at the end of the proof of Proposition \ref{ostrowski}.

\pausa
On the other hand, notice that a direct application of Theorem \ref{LiMat} shows that 
$$ 
\prod_{i=1}^k \frac{\la_i(V^*SV)}{\la_i(S)}\leq \prod_{i=1}^k\la_i(V^*V)\implies
\prod_{i=1}^k \la_i(V^*SV)\leq \prod_{i=1}^k {\la_i(S)} \ \la_i(V^*V) \ . 
$$ 
Hence, we conclude that $\la(V^*SV)\prec_{\log}\la(S)\circ \la(V^*V)\in (\R^d_{>0})\da$. 
Finally, in case that  $\la(V^*SV)=\la(S)\circ \la(V^*V)$ we see that $S$ is an 
UMM for $S$ and therefore $S$ and $|V^*|$ commute. 
In this case it is straightforward to check that there exists an o.n.b. 
$\{v_i\}_{i\in \IN{d}}$ with the desired properties.
\QED

\fontsize {8}{9}\selectfont

\end{document}